\documentclass[epsfig,latexsym,amsfonts,twoside]{article}
\usepackage{amssymb,enumerate,amsmath,amscd,graphicx}
\usepackage{pdfpages}

\usepackage{tikz}
\tikzset{help lines/.style={color=blue!50,very thin}}

\pagestyle{headings}
\topmargin0pt
\oddsidemargin=0truemm
\evensidemargin=0truemm
\textwidth=140truemm
\textheight=203truemm

\def\part#1{\frac{\partial\phantom{#1}}{\partial#1}}
\newtheorem{thm}{Theorem}

\newtheorem{proposition}[thm]{Proposition}
\newtheorem{lemma}[thm]{Lemma}
\newtheorem{corollary}[thm]{Corollary}
\newtheorem{conjecture}[thm]{Conjecture}

\newenvironment{proof}{\begin{trivlist}\item[]{\bf Proof} }%
{\hfill $\Box$ \end{trivlist}}
\newenvironment{definition}{\begin{trivlist}\item[]{\bf Definition}\em }%
{\end{trivlist}}
\newenvironment{remark}{\begin{trivlist}\item[]{\bf Remark} }%
{\end{trivlist}}
\newenvironment{example}{\begin{trivlist}\item[]{\bf Example} }%
{\end{trivlist}}
{\end{trivlist}}

\def\Z{\ifmmode{{\mathbb Z}}\else{${\mathbb Z}$}\fi}
\def\Q{\ifmmode{{\mathbb Q}}\else{${\mathbb Q}$}\fi}
\def\C{\ifmmode{{\mathbb C}}\else{${\mathbb C}$}\fi} 
\def\P{\ifmmode{{\mathbb P}}\else{${\mathbb P}$}\fi} 

\def\H{\ifmmode{{\mathrm H}}\else{${\mathrm H}$}\fi} 

\def\B{\ifmmode{{\cal B}}\else{${\cal B}$}\fi} 
\def\E{\ifmmode{{\cal E}}\else{${\cal E}$}\fi} 
\def\F{\ifmmode{{\cal F}}\else{${\cal F}$}\fi} 
\def\K{\ifmmode{{\cal K}}\else{${\cal K}$}\fi} 
\def\L{\ifmmode{{\cal L}}\else{${\cal L}$}\fi} 
\def\M{\ifmmode{{\cal M}}\else{${\cal M}$}\fi} 
\def\N{\ifmmode{{\cal N}}\else{${\cal N}$}\fi} 
\def\O{\ifmmode{{\cal O}}\else{${\cal O}$}\fi} 
\def\U{\ifmmode{{\cal U}}\else{${\cal U}$}\fi}
\def\X{\ifmmode{{\cal X}}\else{${\cal X}$}\fi} 

\def\Br{\ifmmode{{\mathrm{Br}}}\else{${\mathrm{Br}}$}\fi} 
\def\OG{\ifmmode{\widetilde{\cal M}_4}\else{$\widetilde{\cal M}_4$}\fi} 
\def\D{\ifmmode{{\cal{D}}_{\mathrm{coh}}^b}\else{${{\cal{D}}_{\mathrm{coh}}^b}$}\fi}
\def\Shah{\ifmmode{\amalg\hspace*{-3.5pt}\amalg}\else{$\amalg\hspace*{-3.5pt}\amalg$}\fi}

%
\newcommand{\twowheel}{\raisebox{0pt}{
                 \begin{picture}(12,12)(-6,-4)
                 \put(0,0){\circle{6}} \put(0,3){\line(0,1){4}} \put(0,-3){\line(0,-1){4}}
                 \end{picture}}}

\newcommand{\twoVgraph}{\raisebox{0pt}{
                 \begin{picture}(18,18)(-9,-5)
                 \put(0,0){\circle{16}} \put(-8,0){\line(1,0){16}}
                 \end{picture}}}

\newcommand{\fourVgraph}{\raisebox{0pt}{
                 \begin{picture}(18,26)(-9,-9)
                 \put(0,0){\oval(16,24)} \put(-8,4){\line(1,0){16}}
                 \put(-8,-4){\line(1,0){16}}
                 \end{picture}}}

\newcommand{\sixVgraph}{\raisebox{0pt}{
                 \begin{picture}(26,24)(-13,-8)
                 \put(-9,8){\circle{6}} \put(9,8){\circle{6}}
                 \put(-6,8){\line(1,0){12}} \put(0,-8){\circle{6}} 
                 \put(-9,5){\line(2,-3){7}} \put(9,5){\line(-2,-3){7}}
                 \end{picture}}}

\newcommand{\eightVgraphI}{\raisebox{0pt}{
                 \begin{picture}(26,26)(-13,-9)
                 \put(-9,9){\circle{6}} \put(9,9){\circle{6}}
                 \put(-9,-9){\circle{6}} \put(9,-9){\circle{6}} 
                 \put(-6,9){\line(1,0){12}}
                 \put(-9,6){\line(0,-1){12}}
                 \put(-6,-9){\line(1,0){12}}
                 \put(9,6){\line(0,-1){12}}
                 \end{picture}}}

\newcommand{\eightVgraphII}{\raisebox{0pt}{
                 \begin{picture}(28,28)(-14,-10)
                 \put(-13,13){\line(1,0){26}}
                 \put(-13,-13){\line(1,0){26}}
                 \put(-13,-13){\line(0,1){26}}
                 \put(13,-13){\line(0,1){26}}
                 \put(-3,3){\line(1,0){6}}
                 \put(-3,-3){\line(1,0){6}}
                 \put(-3,-3){\line(0,1){6}}
                 \put(3,-3){\line(0,1){6}}
                 \put(-13,13){\line(1,-1){10}}
                 \put(-13,-13){\line(1,1){10}}
                 \put(13,13){\line(-1,-1){10}}
                 \put(13,-13){\line(-1,1){10}}
                 \end{picture}}}

\newcommand{\tenVgraphI}{\raisebox{0pt}{
                 \begin{picture}(26,29)(-13,-9)
                 \put(-9,9){\circle{6}} \put(9,9){\circle{6}}
                 \put(-9,-9){\circle{6}} \put(9,-9){\circle{6}}
                 \put(0,9){\circle{6}}
                 \put(-6,9){\line(1,0){3}}
                 \put(6,9){\line(-1,0){3}}
                 \put(-9,6){\line(0,-1){12}}
                 \put(-6,-9){\line(1,0){12}}
                 \put(9,6){\line(0,-1){12}}
                 \end{picture}}}

%
\newcommand{\twoWgraph}{\raisebox{0pt}{
                 \begin{picture}(22,22)(-9,-6)
                 \put(0,3){\circle{18}} \put(0,-3){\circle{18}}
                 \end{picture}}}

\newcommand{\threeWgraph}{\raisebox{0pt}{
                 \begin{picture}(22,22)(-9,-6)
                 \put(0,0){\circle{20}} \put(-8.5,-5){\line(1,0){17}}
                 \put(0,10){\line(-3,-5){9}} \put(0,10){\line(3,-5){9}}
                 \end{picture}}}

\newcommand{\fourWgraphI}{\raisebox{0pt}{
                 \begin{picture}(22,22)(-9,-6)
                 \put(0,0){\oval(16,28)} \put(0,0){\oval(28,16)}
                 \end{picture}}}

\newcommand{\fourWgraphII}{\raisebox{0pt}{
                 \begin{picture}(22,22)(-9,-6)
                 \put(0,0){\oval(16,28)} 
                 \put(-8,8){\line(1,0){16}} \put(-8,-8){\line(1,0){16}}
                 \put(-8,8){\line(1,-1){16}} \put(-8,-8){\line(1,1){16}}
                 \end{picture}}}

\newcommand{\fourWgraphIII}{\raisebox{0pt}{
                 \begin{picture}(22,22)(-9,-6)
                 \put(-8,9){\line(0,-1){18}} \put(8,9){\line(0,-1){18}}
                 \put(-8,9){\line(1,0){16}} \put(-8,-9){\line(1,0){16}}
                 \put(0,9){\circle{16}} \put(0,-9){\circle{16}}
                 \end{picture}}}

\begin{document}

\title{Topological bounds on hyperk{\"a}hler manifolds\footnote{2020
    {\em Mathematics Subject Classification.\/} 53C26.}} 
\author{Justin Sawon}
\date{December, 2021}
\maketitle

\begin{abstract}
We conjecture that certain curvature invariants of compact hyperk{\"a}hler manifolds are positive/negative. We prove the conjecture in complex dimension four, give an ``experimental proof'' in higher dimensions, and verify it for all known hyperk{\"a}hler manifolds up to dimension eight. As an application, we show that our conjecture leads to a bound on the second Betti number in all dimensions.
\end{abstract}

\section{Introduction}

In~\cite{rw97} Rozansky and Witten introduced new invariants of hyperk{\"a}hler manifolds. These invariants are defined by taking copies of the curvature tensor and the holomorphic symplectic form of the hyperk{\"a}hler manifold, and contracting indices to get a form that can be integrated over the manifold, giving an $\mathbb{R}$-valued invariant. The pattern by which one contracts the indices is encoded in a trivalent graph $\Gamma$, and the corresponding invariant is written $b_{\Gamma}$.

These Rozansky-Witten invariants were studied by the author in~\cite{sawon00}. Let $M$ be a hyperk{\"a}hler manifold of complex dimension $2n$ (in this article we will always assume our hyperk{\"a}hler manifolds are compact and irreducible, i.e., simply connected and with $h^{2,0}=1$; and by dimension we will always mean the complex dimension). Let $\Theta^n$ be the trivalent graph consisting of $n$ disjoint copies of the trivalent graph
$$\twoVgraph$$
with two vertices. Together with Hitchin~\cite{sawon00,hs01}, the author proved that
$$b_{\Theta^n}(M)=\frac{n!}{(4\pi^2n)^n}\frac{\|R\|^{2n}}{(\mathrm{vol}M)^{n-1}},$$
where $\|R\|^2$ is the $\mathcal{L}^2$-norm of the curvature and $\mathrm{vol}M$ is the volume of $M$. In particular, $b_{\Theta^n}(M)$ is positive. More recent investigations have led to the following conjecture. Let $\Theta_k$ denote the trivalent graph with $2k$ vertices consisting of a cycle of $k$ small loops; thus $\Theta_1=\Theta$ and 
$$\Theta_2=\fourVgraph\qquad\qquad\Theta_3=\sixVgraph\qquad\qquad\Theta_4=\eightVgraphI\qquad\qquad\Theta_5=\tenVgraphI\qquad\qquad\mathrm{etc.}$$

\begin{conjecture}
Let $M$ be a hyperk{\"a}hler manifold of complex dimension $2n$. Then for $2\leq k\leq n$ the Rozansky-Witten invariant $b_{\Theta^{n-k}\Theta_k}(M)$ is negative if $k$ is even and positive if $k$ is odd.
\end{conjecture}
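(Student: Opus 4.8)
The plan is to reinterpret the necklace graph $\Theta_k$ as the trace of the $k$-th power of a single self-adjoint operator built from the curvature, so that the sign of $b_{\Theta^{n-k}\Theta_k}$ is forced by a spectral argument. The first step is to recall that on a hyperk{\"a}hler manifold the Riemann curvature, with indices raised and lowered by the holomorphic symplectic form $\omega$, becomes a totally symmetric tensor $\Phi$ carrying an antiholomorphic one-form index; this is the object placed at each trivalent vertex, its three symmetric legs being contracted along the edges of $\Gamma$ via $\omega^{-1}$. Since $\Theta^{n-k}$ contributes $2(n-k)$ vertices and $\Theta_k$ contributes $2k$, the total is $2n$, matching the complex dimension, so the antiholomorphic form indices assemble into a top $(0,2n)$-form that pairs with $\omega^n$ to give the integrand.

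Next I would isolate a single ``bead'' of the necklace---two vertices joined by a double edge---as a symmetric two-tensor-valued $(0,2)$-form
$$B_{ab}=\omega^{ce}\omega^{df}\,\Phi_{acd}\wedge\Phi_{bef},$$
and observe that the cyclic structure of $\Theta_k$ expresses $b_{\Theta_k}$ as $\int_M \mathrm{tr}(B^{k})\wedge(\cdots)$, where the $k$-fold product chains the free tensor indices around the cycle via $\omega^{-1}$ while wedging the form parts. Since the $\Theta^{n-k}$ factor is, by the Hitchin--Sawon formula, a positive multiple of $\|R\|^{2(n-k)}/(\mathrm{vol}\,M)^{n-k-1}$, the sign of $b_{\Theta^{n-k}\Theta_k}$ is governed entirely by the sign of this necklace trace.

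The crux is then to show that, after absorbing the antisymmetry of $\omega$ and the anticommutation of the wedged $(0,2)$-forms into a universal sign $(-1)^{k+1}$, the residual operator $\hat B$ is self-adjoint and positive definite for a natural Hermitian inner product, so that $\mathrm{tr}(\hat B^{k})=\sum_i\lambda_i^{\,k}>0$ for every $k\ge 1$. Granting this, $b_{\Theta_k}=(-1)^{k+1}\cdot(\text{positive})$, whence $b_{\Theta^{n-k}\Theta_k}>0$ for odd $k$ and $<0$ for even $k$. I expect the main obstacle to lie exactly here: $\omega^{-1}$ is antisymmetric, so $B$ is not self-adjoint for any naive pairing, and one must fold the tensor contraction together with a Hodge-type pairing on the form factors---exploiting the quaternionic real structure of the hyperk{\"a}hler metric---to recognise $\hat B$ as a Gram-type operator $\Phi\cdot\bar\Phi$ and thereby simultaneously extract the universal sign and the positive-definiteness.

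Finally, in complex dimension four ($n=2$) only $k=2$ occurs, and the whole problem collapses to showing the single invariant $b_{\Theta_2}(M)$ is negative. Here the necklace trace is $\int_M \mathrm{tr}(B^2)\wedge(\cdots)$, which by the mechanism above unwinds to $-C\|B\|^2$ for an explicit positive constant $C$, i.e. the negative of a genuine $\mathcal{L}^2$-norm; this yields the theorem unconditionally in dimension four, while the general conjecture rests on the spectral positivity of $\hat B$ asserted above.
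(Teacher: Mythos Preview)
The statement is a conjecture, and the paper does not prove it in general; it proves only the case $2n=4$. Your proposal sketches a strategy---rewrite the necklace as a trace of the $k$-th power of a ``bead'' operator and extract a universal sign $(-1)^{k+1}$---but it is not a proof. You yourself name the gap: the contractions are made with the \emph{antisymmetric} form $\tilde\sigma$ (equivalently $\epsilon$), so there is no evident Hermitian pairing for which your $B$ becomes a positive self-adjoint operator, and you do not construct one. Absent that construction, nothing follows for any $k$.

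In particular, your dimension-four paragraph is not an unconditional proof. You assert that for $k=2$ the necklace trace ``unwinds to $-C\|B\|^{2}$'' by ``the mechanism above'', but that mechanism is exactly the step you left open. In the Rozansky--Witten formulation the $\Theta_{2}$ integrand is a sum of three distinct quartic contractions of $\Omega$ (arising from the three terms of the totally skew form $\epsilon^{DHLP}$, equivalently the three $4$-valent graphs), and the paper's numerical experiments indicate that these three pieces are \emph{not} individually of one sign---only their sum is pointwise negative. So the integrand is not manifestly $-\|B\|^{2}$ for any single tensor $B$; if such an algebraic identity exists, it has to be exhibited, not asserted.

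The paper's actual proof in dimension four is completely different and purely topological: it writes $b_{\Theta_{2}}(M)=-\tfrac{4}{5}(c_{2}^{2}-2c_{4})[M]$, converts the Chern numbers to Betti numbers to get $b_{\Theta_{2}}(M)=-512+16b_{2}-4b_{3}$, and then applies Guan's bounds on $(b_{2},b_{3})$ to obtain $b_{\Theta_{2}}(M)\leq -144$. The pointwise-sign route you outline is precisely what the paper's ``experimental proof'' section suggests \emph{ought} to succeed and would be a genuine advance if carried through, but as written it remains a heuristic, not a proof.
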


Our evidence for this conjecture takes three forms.
\begin{enumerate}
\item We prove the conjecture in dimension $2n=4$, i.e., we prove that $b_{\Theta_2}(M)$ is always negative.
\item We give an ``experimental proof'' in higher dimensions.
\item We verify the conjecture for all currently known hyperk{\"a}hler manifolds in dimensions six and eight.
\end{enumerate}

Using results of Salamon~\cite{salamon96} and Verbitsky~\cite{verbitsky96}, Guan~\cite{guan01} bounded the Betti numbers of four-dimensional hyperk{\"a}hler manifolds. The Rozansky-Witten invariants can be computed in terms of the Betti numbers and this suffices to show that $b_{\Theta_2}(M)<0$.

By ``experimental proof'' we mean the following. We generate a random multi-array of complex numbers with the same symmetries as the curvature tensor of a hyperk{\"a}hler manifold. We take copies of this multi-array and contract indices according to the trivalent graph $\Theta_2$. Repeating this computation hundreds of times, we always get negative real numbers. This suggests that $b_{\Theta^{n-2}\Theta_2}(M)$ is negative because the integrand of this invariant is negative at all points of $M$. We perform similar computations for $\Theta_3$, always getting positive real numbers, and therefore expect that the integrand of $b_{\Theta^{n-3}\Theta_3}(M)$ is positive at all points of $M$.

In dimensions six and eight we have Hilbert schemes of points on K3 surfaces and generalized Kummer varieties; their Rozansky-Witten invariants were calculated in Sawon~\cite{sawon00}, and one can readily verify the conjecture is true for these examples. In dimension six there is one additional example due to O'Grady~\cite{ogrady03}, whose Hodge numbers were calculated by Mongardi, Rapagnetta, and Sacc{\`a}~\cite{mrs18}. In six dimensions the Rozansky-Witten invariants can be computed in terms of the Hodge numbers, and this leads to a verification of the conjecture for O'Grady's example too.

As an application, we prove that if $b_{\Theta^{n-2}\Theta_2}(M)<0$ then the second Betti number $b_2$ is bounded. For this result we use a recent result of Jiang~\cite{jiang20} which gives an upper bound on $b_{\Theta^n}(M)$. The degree two cohomology group $\H^2(M,\Z)$ plays a central role in hyperk{\"a}hler geometry. For example, Theorem~4.3 of Huybrechts~\cite{huybrechts03} states that in each dimension there exist only finitely many deformation types of irreducible hyperk{\"a}hler manifolds such that the lattice $(\H^2(M,\Z),\tilde{q})$ is isomorphic to a given one, where $\tilde{q}$ denotes a certain positive integral multiple of the Beauville-Bogomolov quadratic form $q$. Bounding $b_2$ therefore represents a first step toward proving finiteness of hyperk{\"a}hler manifolds. Note however that $q$ and $\tilde{q}$ are not unimodular in general, so even for fixed $b_2$ infinitely many lattices could arise. A different approach to bounding $b_2$ is to analyze the Looijenga-Lunts-Verbitsky decomposition of the cohomology of the hyperk{\"a}hler manifold. Under a hypothesis on the irreducible representations that appear in this decomposition, the author~\cite{sawon15} and Kim and Laza~\cite{kl20} proved that in six dimensions $b_2\leq 23$ (which would be a sharp bound).

Finally, we compare our results to recent work of Cao and Jiang~\cite{cj20} and Jiang~\cite{jiang20}. They investigate the Todd class of a hyperk{\"a}hler manifold and show that it is quasi-effective, which translates into positivity of certain Rozansky-Witten invariants. Our conjecture appears to be stronger, and it implies their results in some cases.

The author would like to thank Simon Salamon for comments on an earlier draft of this paper and the NSF for support through grants DMS-1206309 and DMS-1555206.

\section{Rozansky-Witten invariants}

Rozansky-Witten invariants were defined in~\cite{rw97}, and an equivalent formulation was given by Kapranov~\cite{kapranov99} (see also Kontsevich~\cite{kontsevich99}). We will briefly describe the latter, which is the approach most closely followed in Sawon~\cite{sawon00} and Hitchin and Sawon~\cite{hs01}. Let $M$ be a compact hyperk{\"a}hler manifold of complex dimension $2n$. We fix a compatible complex structure and regard $M$ as a K{\"a}hler manifold. Using the holomorphic symplectic form $\sigma$ to identify $T$ and $\Omega^1$, we can regard the Riemann curvature tensor as a section in
$$\Omega^{1,1}(M,\mathrm{End}T)=\Omega^{0,1}(M,\Omega^1\otimes\Omega^1\otimes T)\cong\Omega^{0,1}(M,\Omega^1\otimes\Omega^1\otimes\Omega^1).$$
The fact that $M$ is hyperk{\"a}hler implies additional symmetries, so that we get
$$\Phi\in\Omega^{0,1}(M,\mathrm{Sym}^3\Omega^1).$$
Let $\Gamma$ be an oriented trivalent graph with $2n$ vertices. Taking $2n$ copies of $\Phi$ and $3n$ copies of the dual $\tilde{\sigma}$ of the symplectic form, we can contract indices according to $\Gamma$ to get a section in $(\Omega^{0,1})^{\otimes 2n}(M)$. Anti-symmetrizing gives
$$\Gamma(\Phi)\in\Omega^{0,2n}(M).$$

\begin{example}
Denote by $\Theta$ and $\Theta_k$, $k\geq 2$, the trivalent graphs
$$\Theta=\twoVgraph\qquad\qquad\Theta_2=\fourVgraph\qquad\qquad\Theta_3=\sixVgraph\qquad\qquad\Theta_4=\eightVgraphI\qquad\qquad\mathrm{etc.}$$
Then
$$\Theta(\Phi)_{\bar{d}\bar{h}}=\Phi_{abc\bar{d}}\Phi_{efg\bar{h}}\tilde{\sigma}^{ae}\tilde{\sigma}^{bf}\tilde{\sigma}^{cg}\in\Omega^{0,2}(M),$$
and
$$\Theta_2(\Phi)_{\bar{d}\bar{h}\bar{l}\bar{p}}=\Phi_{abc\bar{d}}\Phi_{efg\bar{h}}\Phi_{ijk\bar{l}}\Phi_{mno\bar{p}}\tilde{\sigma}^{ae}\tilde{\sigma}^{bf}\tilde{\sigma}^{im}\tilde{\sigma}^{jn}\tilde{\sigma}^{ck}\tilde{\sigma}^{go}\in\Omega^{0,4}(M).$$
The orientation of $\Gamma$ is defined in a way that takes care of any ambiguity in how to order the terms and their indices.
\end{example}

Finally, we can multiply by $\sigma^n\in\Omega^{2n,0}(M)$ and integrate the resulting $(2n,2n)$-form over $M$.

\begin{definition}
The Rozansky-Witten invariant of the hyperk{\"a}hler manifold $M$ corresponding to the trivalent graph $\Gamma$ is
$$b_{\Gamma}(M)=\frac{1}{(8\pi^2)^nn!}\int_M\Gamma(\Phi)\sigma^n.$$
\end{definition}

We now assume that $M$ is irreducible, i.e., simply connected and with $h^{2,0}=1$. Then $\H^{0,2k}_{\bar{\partial}}(M)$ is generated by the class of $\bar{\sigma}^k$ for $k=1,\ldots,n$. In particular, if $\Gamma$ is a trivalent graph with $2k$ vertices then
$$[\Gamma(\Phi)]=\beta_{\Gamma}[\bar{\sigma}^k]\in\H^{0,2k}_{\bar{\partial}}(M)$$
for some constant $\beta_{\Gamma}$. Hitchin and Sawon~\cite{hs01} proved the following result.

\begin{proposition}
$$\beta_{\Theta}=\frac{1}{2n}\frac{\|R\|^2}{\mathrm{vol}M}$$
where $\|R\|^2$ is the $L^2$-norm of the curvature tensor of $M$ and $\mathrm{vol}M$ is its volume. In particular, $\beta_{\Theta}>0$.
\end{proposition}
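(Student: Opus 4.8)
The plan is to extract $\beta_{\Theta}$ as a Hodge-theoretic projection and then identify the resulting $L^2$ pairing with the curvature norm. Since $M$ is hyperk\"ahler, the holomorphic symplectic form $\sigma$ is parallel, hence $\bar{\sigma}$ is a harmonic $(0,2)$-form, and by the discussion above it spans the one-dimensional space $\H^{0,2}_{\bar{\partial}}(M)$. The form $\Theta(\Phi)$ is $\bar{\partial}$-closed, so its Hodge decomposition reads $\Theta(\Phi)=\beta_{\Theta}\bar{\sigma}+\bar{\partial}\eta$, with no $\bar{\partial}^*$-exact component. Pairing against $\bar{\sigma}$ and using $\bar{\partial}^*\bar{\sigma}=0$ annihilates the exact term, giving $\langle\Theta(\Phi),\bar{\sigma}\rangle=\beta_{\Theta}\|\bar{\sigma}\|^2$, so that $\beta_{\Theta}=\langle\Theta(\Phi),\bar{\sigma}\rangle/\|\bar{\sigma}\|^2$. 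The computation now splits into evaluating this numerator and denominator.

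For the numerator I would work pointwise in normal coordinates. Writing out $\Theta(\Phi)_{\bar{d}\bar{h}}=\Phi_{abc\bar{d}}\Phi_{efg\bar{h}}\tilde{\sigma}^{ae}\tilde{\sigma}^{bf}\tilde{\sigma}^{cg}$ and forming the Hermitian inner product with $\bar{\sigma}$ contracts the two free antiholomorphic slots against $\sigma$ through the metric. The crux is the compatibility of $\sigma$ with the metric: the composition of $\tilde{\sigma}$ with the metric is, up to a constant, the conjugate-linear map furnished by the quaternionic structure, so each $\tilde{\sigma}$-contraction between the two copies of $\Phi$ can be traded for a metric contraction. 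After carrying out all three such trades and the final pairing with $\sigma$, the integrand should collapse to a universal constant times the pointwise Riemannian norm $|R|^2$ of the curvature tensor, recalling that under $\sigma\colon T\cong\Omega^1$ the tensor $\Phi$ \emph{is} the curvature. Integrating then yields $\langle\Theta(\Phi),\bar{\sigma}\rangle=c_1\|R\|^2$ for an explicit constant $c_1$.

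For the denominator, parallelism of $\sigma$ makes $|\sigma|^2$ constant on $M$, so $\|\bar{\sigma}\|^2=|\sigma|^2\,\mathrm{vol}M$. Computing $|\sigma|^2$ from the chosen normalization of $\sigma$ (the same identification $T\cong\Omega^1$ used to define $\Phi$) introduces the dimensional dependence: since $\sigma$ is a nondegenerate two-form pairing up the $2n$ holomorphic directions, its squared norm is a fixed multiple of $n$. Dividing, the volume factors cancel and the remaining numerical constants combine to the asserted $\tfrac{1}{2n}$, giving $\beta_{\Theta}=\tfrac{1}{2n}\,\|R\|^2/\mathrm{vol}M$. Positivity is then immediate: $\|R\|^2>0$ because a compact irreducible hyperk\"ahler manifold is Ricci-flat but not flat, so its curvature does not vanish identically.

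I expect the main obstacle to be the pointwise identity of the second paragraph: one must use the hyperk\"ahler relations among the metric, the complex structures $I,J,K$, and $\sigma$ to verify that the particular index-contraction pattern encoded by $\Theta$ reassembles \emph{exactly} into the standard norm $|R|^2$, with no leftover traces, and then to pin down every combinatorial and normalization constant precisely enough to land on $\tfrac{1}{2n}$ rather than some other rational multiple. The careful bookkeeping of these constants, rather than any conceptual difficulty, is where the real work lies.
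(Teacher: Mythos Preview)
The paper does not actually prove this proposition; it attributes the result to Hitchin and Sawon~\cite{hs01} and states it without argument. The only hint the paper provides is later, in the Rozansky--Witten formulation with the $\mathrm{Sp}(n)$-bundle $V$: it observes (Section~4) that the full $\epsilon$-contraction $\Omega_{ABCD}\Omega_{EFGH}\epsilon^{AE}\epsilon^{BF}\epsilon^{CG}\epsilon^{DH}$ is the pointwise norm squared of the curvature tensor, which is exactly the pointwise identity underlying $\beta_\Theta>0$.

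Your outline is sound and is essentially what the cited proof does, only you have chosen to work in Kapranov's $(0,1)$-form picture rather than the Rozansky--Witten $V$-bundle picture. The projection step (writing $\Theta(\Phi)=\beta_\Theta\bar\sigma+\bar\partial\eta$ and pairing with $\bar\sigma$) is correct because $\Theta(\Phi)$ is $\bar\partial$-closed. The remaining work, as you correctly identify, is the pointwise algebraic identity: showing that the $\Theta$-contraction of two copies of $\Phi$, after pairing with $\sigma$, reproduces $|R|^2$ up to a precise constant. In the $V$-bundle language this is transparent, since $\epsilon$ induces the natural pairing on $\mathrm{Sym}^4V$ and $\Omega\in\mathrm{Sym}^4V$ \emph{is} the curvature; in your language one must unwind the identification $T\cong\Omega^1$ via $\sigma$ and use that the composition of $\tilde\sigma$ with the Hermitian metric is the antilinear map $J$. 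Either way the combinatorics are the same, and your assessment that the difficulty is bookkeeping rather than concept is accurate.
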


We can now calculate the Rozansky-Witten invariant associated to $\Theta^n$, the disjoint union of $n$ copies of the graph $\Theta$. We find
$$b_{\Theta^n}(M)=\frac{1}{(8\pi^2)^nn!}\int_M\Theta(\Phi)^n\sigma^n=\frac{1}{(8\pi^2)^nn!}\beta_{\Theta}^n\int_M\sigma^n\bar{\sigma}^n=\frac{n!}{(4\pi^2n)^n}\frac{\|R\|^{2n}}{(\mathrm{vol}M)^{n-1}}>0.$$
Here we used the fact that
$$\int_M\sigma^n\bar{\sigma}^n=\frac{1}{{2n\choose n}}\int_M(\sigma+\bar{\sigma})^{2n}=2^{2n}(n!)^2\int_M\frac{\omega_J^{2n}}{(2n)!}=2^{2n}(n!)^2\mathrm{vol}M.$$
In Sawon~\cite{sawon99} it was conjectured that this invariant is related to Chern numbers, and this was proved in Hitchin and Sawon~\cite{hs01}.

\begin{thm}
$$b_{\Theta^n}(M)=48^nn!\hat{A}^{1/2}[M]$$
where 
$$\hat{A}^{1/2}=1+\frac{1}{24}c_2+\frac{1}{5760}(7c_2^2-4c_4)+\frac{1}{967680}(31c_2^3-44c_2c_4+16c_6)+\ldots$$
is the characteristic class given by the square root of the $\hat{A}$-polynomial.
\end{thm}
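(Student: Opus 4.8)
The plan is to compute $b_{\Theta^n}(M)$ directly as a characteristic number and then to recognise that number as $48^nn!\hat{A}^{1/2}[M]$. The formula $b_{\Theta^n}(M)=\frac{n!}{(4\pi^2n)^n}\|R\|^{2n}/(\mathrm{vol}M)^{n-1}$ established above already shows that $b_{\Theta^n}(M)$ depends only on $\|R\|^2$ and the volume; since Rozansky--Witten invariants are independent of the metric and of the chosen compatible complex structure, the same number is a fixed universal polynomial in the Chern numbers of $M$. The theorem is therefore the assertion that this polynomial is $48^nn!\hat{A}^{1/2}$. What makes the statement delicate is precisely that the left-hand side sees only the single scalar $\|R\|^2$, whereas $\hat{A}^{1/2}[M]$ involves $c_2,c_4,\dots,c_{2n}$; the proof must explain how all of these Chern numbers are encoded, for a hyperk\"ahler manifold, in the curvature norm.

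To make the topological side visible I would abandon the metric description in favour of Kapranov's: the tensor $\Phi$ is a Dolbeault representative of the Atiyah class $\a_T\in\H^1(M,\Omega^1\otimes\mathrm{End}\,T)$ of the tangent bundle, and the holomorphic symplectic form supplies an invariant inner product, so that the shifted tangent complex $T[-1]$ becomes a metric Lie algebra object in $\D$ with $\a_T$ as bracket. In this language $b_\Gamma(M)$ is the value on this object of the Lie-algebra weight system attached to $\Gamma$, evaluated by feeding the Atiyah-class morphisms indexed by the edges and vertices of $\Gamma$ into a trace on $\D$ and integrating over $M$. By Atiyah's theorem this is a polynomial in the Chern numbers, which re-proves that $b_{\Theta^n}(M)$ is a characteristic number and, more importantly, tells us which trace to use.

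The key point is that the trace which evaluates these invariants is the $\sqrt{\mathrm{Td}(M)}$-twisted Serre (Mukai) pairing on $\H^\bullet(M,\mathcal{O}_M)$; the square root is the shadow of the Duflo element $j^{1/2}$, equivalently of the Wheeling isomorphism, under which the generating series of the ``wheel'' contributions to the weight system is exactly $\hat{A}^{1/2}$. Because $c_1(M)=0$ we have $\mathrm{Td}(M)=\hat{A}(M)$ and hence $\sqrt{\mathrm{Td}(M)}=\hat{A}^{1/2}$. Since $M$ is irreducible, $\H^\bullet(M,\mathcal{O}_M)$ is the truncated polynomial algebra on $[\bar{\sigma}]$, and the closed graph $\Theta^n$ produces the top self-pairing of this algebra; passing it through the $\hat{A}^{1/2}$-twisted trace and coupling to $\sigma^n$ extracts the degree-$2n$ component of $\hat{A}^{1/2}$, namely $\hat{A}^{1/2}[M]$. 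To fix the overall constant I would anchor on the K3 surface ($n=1$): there $\hat{A}^{1/2}[M]=\tfrac1{24}c_2[M]=1$, while Gauss--Bonnet for the Ricci-flat metric gives $\|R\|^2=8\pi^2\chi=192\pi^2$, so that $b_\Theta=192\pi^2/(4\pi^2)=48=48^1\cdot1!\cdot\hat{A}^{1/2}[M]$; multiplicativity of disjoint unions then propagates the constant to $48^nn!$.

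The main obstacle is turning this conceptual picture into an identity with exact constants. One must (i) justify the metric Lie algebra structure on $T[-1]$ and the associated weight system in the holomorphic, derived-category setting; (ii) prove that the trace governing the evaluation of $b_\Gamma$ really is the $\hat{A}^{1/2}$-twisted Serre pairing, i.e. establish the Duflo/Wheeling input in this geometric context; and, hardest of all, (iii) show that the contribution of the (non-wheel) graph $\Theta^n$ reorganises, via the $\mathrm{Sym}^3$ symmetry of $\Phi$ and the IHX relations in the weight system, into precisely the wheel generating series $\hat{A}^{1/2}$ with the stated normalisation. The constant bookkeeping in (iii) is where I expect the real work to lie; beyond the $n=1$ anchor, a check against $\hat{A}^{1/2}=1+\tfrac1{24}c_2+\tfrac1{5760}(7c_2^2-4c_4)$ in dimension $2n=4$ gives an independent consistency test.
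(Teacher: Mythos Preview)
The paper does not prove this theorem in the text; it is quoted from Hitchin and Sawon~\cite{hs01}. Your outline is, in fact, the strategy of that reference: work in Kapranov's Atiyah-class formulation, identify the Rozansky--Witten weight of the $2k$-wheel $w_{2k}$ with the component $s_{2k}=(2k)!\,\mathrm{ch}_{2k}$ of the Chern character via Chern--Weil, and then invoke the Wheeling Theorem of Bar-Natan, Le, and Thurston --- a purely combinatorial identity in the algebra of Jacobi diagrams modulo IHX, the graphical avatar of the Duflo isomorphism --- to convert $\Theta^n$ into an exponential of wheels whose evaluation is $\hat{A}^{1/2}$. So your steps (i) and (ii) and the Duflo/Wheeling heuristic are on target, and your step (iii), which you rightly flag as the crux, is precisely where \cite{hs01} imports Wheeling as a black box rather than reorganising graphs by hand through IHX.

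One genuine gap: your closing claim that ``multiplicativity of disjoint unions then propagates the constant to $48^nn!$'' from the K3 anchor does not suffice. The statement is an equality of universal polynomials in $c_2,c_4,\ldots,c_{2n}$, not of a single scalar, and evaluating on K3 (or on products of K3 surfaces, which are not irreducible hyperk\"ahler) fixes at most one coefficient. In \cite{hs01} the full polynomial and the factor $48^nn!$ emerge simultaneously from matching the modified-Bernoulli coefficients in the wheels element $\hat{\Omega}$ to those in the power-series expansion of $\prod_i\bigl(\tfrac{x_i/2}{\sinh(x_i/2)}\bigr)^{1/2}$. Your K3 computation is a correct consistency check, but it is not the mechanism that determines the answer in general.
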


Generalizing $\beta_{\Theta}>0$, we make the following conjecture.

\begin{conjecture}
\label{main}
For all irreducible hyperk{\"a}hler manifolds $M$ of complex dimension $2n$ and for $2\leq k\leq n$ we have
$$\beta_{\Theta_k}\left\{\begin{array}{ccl} < & 0 & \mbox{if }k\mbox{ is even,} \\
 > & 0 & \mbox{if }k\mbox{ is odd.} \end{array}\right.$$
Equivalently
 $$b_{\Theta^{n-k}\Theta_k}(M)\left\{\begin{array}{ccl} < & 0 & \mbox{if }k\mbox{ is even,} \\
 > & 0 & \mbox{if }k\mbox{ is odd.} \end{array}\right.$$
 \end{conjecture}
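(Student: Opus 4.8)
The plan is to split the problem into the four-dimensional case, where the invariant can be reduced to characteristic numbers and then controlled by known topological constraints, and the general case, where only a pointwise argument seems available. In either case I first reduce the sign of $b_{\Theta^{n-k}\Theta_k}(M)$ to that of $\beta_{\Theta_k}$: running the computation of the Introduction with $\Theta^{n-k}\Theta_k$ in place of $\Theta^n$ gives $b_{\Theta^{n-k}\Theta_k}(M)=\frac{2^{2n}n!}{(8\pi^2)^n}\,\beta_\Theta^{\,n-k}\beta_{\Theta_k}\,\mathrm{vol}M$, and since $\beta_\Theta>0$ by the Proposition, $b_{\Theta^{n-k}\Theta_k}(M)$ has the same sign as $\beta_{\Theta_k}$.

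For $2n=4$ (so $n=k=2$) I would argue as follows. The space of trivalent graphs on four vertices, modulo the antisymmetry and IHX relations, is two-dimensional and spanned by $\Theta^2$ and $\Theta_2$, so every degree-two Rozansky-Witten invariant is a universal linear combination of the only two nonvanishing Chern numbers $c_2^2[M]$ and $c_4[M]=\chi(M)$ (recall $c_1=0$). From the degree-four part of $\hat A^{1/2}$ the Hitchin-Sawon theorem gives $b_{\Theta^2}(M)=\tfrac{4}{5}\bigl(7c_2^2-4c_4\bigr)[M]$, and I would fix the companion expression $b_{\Theta_2}(M)=\lambda\,c_2^2[M]+\mu\,c_4[M]$ by evaluating both sides on the two known fourfolds $K3^{[2]}$ and the generalized Kummer variety, whose Chern and Rozansky-Witten data are recorded in Sawon~\cite{sawon00}.

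The next step converts this into a question about Betti numbers. Writing $c_4=\chi$ and using the Hirzebruch signature formula with $p_1=-2c_2$ and $p_2=c_2^2+2c_4$ to express $c_2^2$ through the signature $\tau$ and $\chi$, the quantity $b_{\Theta_2}(M)$ becomes an affine function of $\chi$ and $\tau$, hence of the Hodge numbers of $M$. Now the topological input enters: Salamon's linear relation among the Betti numbers~\cite{salamon96}, with Verbitsky's constraints~\cite{verbitsky96} and Guan's resulting bounds~\cite{guan01}, reduce the admissible $(b_2,b_3,b_4)$ (after imposing $b_1=0$ and Poincar{\'e} duality) to a short finite list. I expect $b_{\Theta_2}(M)$ to be negative throughout, which one can confirm case by case; this would establish $b_{\Theta_2}(M)<0$, i.e.\ the conjecture in dimension four.

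In higher dimensions the only route I see is to prove the sign pointwise, matching the experimental evidence. At a point $\Phi$ is a tensor $\Phi_{abc\bar d}$, symmetric in $a,b,c$ and subject to the algebraic symmetries of a hyperk{\"a}hler curvature tensor, and it would suffice to show that the scalar formed by contracting copies of $\Phi$ along $\Theta_k$ and pairing the remaining antiholomorphic indices with $\bar\sigma$ has sign $(-1)^k$ for every such $\Phi$. When $k=1$ this scalar is precisely $\|R\|^2$, a manifest square, which is exactly why $\beta_\Theta>0$ is elementary; the main obstacle is that for $k\geq2$ no analogous sum-of-squares presentation (or negative thereof) is known. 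Finding the correct sign-definite Hermitian form built from $\Phi$—equivalently, proving the inequality as a real-algebraic statement compatible with the Bianchi and hyperk{\"a}hler identities—is the crux of the problem, and is what the random-sampling experiment renders only plausible rather than proved.
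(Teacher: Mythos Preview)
The statement is a conjecture, and the paper does not prove it in full: it establishes only the case $2n=4$ (Section~3), offers numerical/experimental evidence in Section~4, and checks the known examples in dimensions six and eight. Your proposal has exactly this shape---a proof for $n=k=2$ and an honest acknowledgement that the general case rests on an unproved pointwise sign identity---so it is not a proof of the full conjecture, but it is incomplete in precisely the same way as the paper.

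For the four-dimensional part your route is essentially the paper's. The only differences are cosmetic: the paper quotes $b_{\Theta_2}(M)=-\tfrac{4}{5}(c_2^2-2c_4)[M]$ directly from~\cite{sawon00} rather than interpolating the coefficients from $S^{[2]}$ and $K_2(A)$, and it then uses the ready-made relations $c_2^2[M]=736+4b_2-b_3$ and $c_4[M]=48+12b_2-3b_3$ from~\cite{sawon08} (which encode $\chi(\mathcal{O}_M)=3$ together with Salamon's relation) in place of the Hirzebruch signature formula. Both routes arrive at $b_{\Theta_2}(M)=-512+16b_2-4b_3$ and finish with Guan's bounds, the maximum $b_2=23$, $b_3=0$ giving $b_{\Theta_2}(M)\le -144$. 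Your interpolation step is legitimate since the Chern-number vectors of the two known fourfolds are linearly independent.
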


The second statement is equivalent to the first because
$$b_{\Theta^{n-k}\Theta_k}(M)=\frac{1}{(8\pi^2)^nn!}\beta_{\Theta}^{n-k}\beta_{\Theta_k}\int_M\sigma^n\bar{\sigma}^n=\frac{n!}{(2\pi^2)^n}\beta_{\Theta}^{n-k}\beta_{\Theta_k}\mathrm{vol}M$$
and $\beta_{\Theta}>0$. For $k=2$ we have the following result, proved on pages 67 to 72 of Sawon~\cite{sawon00}.

\begin{thm}
The invariant $b_{\Theta^{n-2}\Theta_2}(M)$ is a linear combination of Chern numbers, which can be computed explicitly in any dimension. In dimensions four and six we have
$$b_{\Theta_2}(M)=-\frac{4}{5}(c_2^2-2c_4)[M]$$
and
$$b_{\Theta\Theta_2}(M)=-\frac{8}{35}(11c_2^3-26c_2c_4+12c_6)[M].$$
\end{thm}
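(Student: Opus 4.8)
The plan is to identify the Rozansky--Witten integrand with a characteristic-class density and then evaluate the resulting contraction explicitly. The starting point is Kapranov's observation that, once $T$ is identified with $\Omega^1$ by $\sigma$, the tensor $\Phi$ is the Atiyah class of the holomorphic tangent bundle. By Atiyah's holomorphic Chern--Weil theory the Chern classes of $M$ are represented by complete contractions of copies of the Atiyah class, so any complete contraction of $2n$ copies of $\Phi$ against $\tilde\sigma$ and $\sigma$---in particular $\Theta(\Phi)^{n-2}\Theta_2(\Phi)\sigma^n$---represents a polynomial of weight $2n$ in the Chern classes. Integrating, $b_{\Theta^{n-2}\Theta_2}(M)$ is a rational linear combination of Chern numbers, and since the holonomy reduces to $\mathrm{Sp}(n)$ the odd Chern classes drop out, leaving monomials in $c_2,c_4,\ldots,c_{2n}$. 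This proves the first assertion and shows the coefficients are fixed by a universal, dimension-free calculation.

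To make the evaluation effective I would expand the necklace contraction
$$\Theta_2(\Phi)_{\bar{d}\bar{h}\bar{l}\bar{p}}=\Phi_{abc\bar{d}}\Phi_{efg\bar{h}}\Phi_{ijk\bar{l}}\Phi_{mno\bar{p}}\tilde\sigma^{ae}\tilde\sigma^{bf}\tilde\sigma^{im}\tilde\sigma^{jn}\tilde\sigma^{ck}\tilde\sigma^{go}$$
using the total symmetry of $\Phi$ in its holomorphic indices together with the algebraic and second Bianchi identities. Diagrammatically the Bianchi identities are the IHX relation, and they allow $\Theta_2$ to be rewritten modulo graphs whose value is already known; the expected outcome is a reduction of the necklace to a wheel together with the disjoint union $\Theta^2$. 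In complex dimension four the space of weight-four monomials is spanned by $c_2^2$ and $c_4$, so $b_{\Theta_2}(M)=a\,c_2^2[M]+b\,c_4[M]$ for universal constants $a,b$; in complex dimension six the relevant span is $c_2^3,c_2c_4,c_6$. The problem therefore collapses to fixing two, respectively three, rational numbers.

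These constants follow from the explicit contraction, and I would corroborate them against the data already in hand. In dimension four the formula $b_{\Theta^n}=48^nn!\hat A^{1/2}[M]$ gives $b_{\Theta^2}(M)=\tfrac45(7c_2^2-4c_4)[M]$, while the weight-four part of $\log\hat A^{1/2}$ equals $\tfrac1{5760}(2c_2^2-4c_4)$, the fourth Newton power sum of the Chern roots; since $\Theta_2$ is connected its value should be proportional to this power sum, consistent with $-\tfrac45(c_2^2-2c_4)=-\tfrac25(2c_2^2-4c_4)$. A further check is to evaluate on the Hilbert scheme $\mathrm{K3}^{[2]}$ and the generalized Kummer fourfold, whose Chern numbers are classical. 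The six-dimensional computation runs identically with the extra disjoint factor $\Theta$, landing in the three-dimensional span above and yielding $b_{\Theta\Theta_2}(M)=-\tfrac{8}{35}(11c_2^3-26c_2c_4+12c_6)[M]$.

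The hardest part will be the explicit reduction of the contraction. The difficulty is purely combinatorial: one must track the multiplicities produced by antisymmetrizing into $\Omega^{0,2k}(M)$ and by the $\mathrm{Sym}^3$ symmetry of $\Phi$, apply the Bianchi (IHX) moves in the right order, and carry the normalization from the definition of $b_\Gamma$ through to the end so that the coefficients $-\tfrac45$ and $-\tfrac{8}{35}$ emerge exactly. The conceptual input---that every such invariant is a Chern number---is essentially formal once $\Phi$ is recognized as the Atiyah class; it is this bookkeeping, equivalently the precise IHX reduction of $\Theta_2$ to a wheel plus $\Theta^2$, that carries the real weight of the calculation.
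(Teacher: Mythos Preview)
Your outline is correct and matches the approach of the cited source (the present paper does not reprove the theorem here, only refers to pages 67--72 of \cite{sawon00}). The identities the paper invokes later in Section~7,
\[
\beta_\Theta^2+2\beta_{\Theta_2}=\frac{(8\pi^2)^2n(n-1)\int_M s_2^2\,\sigma^{n-2}\bar\sigma^{n-2}}{\int_M\sigma^n\bar\sigma^n},\qquad
\tfrac{5}{2}\beta_{\Theta_2}=\frac{(8\pi^2)^2n(n-1)\int_M(-s_4)\,\sigma^{n-2}\bar\sigma^{n-2}}{\int_M\sigma^n\bar\sigma^n}
\]
(with $s_k=k!\,\mathrm{ch}_k$), are exactly the ``necklace $=$ wheel plus $\Theta^2$'' reduction you anticipate; from these the coefficients $-\tfrac45$ and $-\tfrac{8}{35}$ follow by substituting $s_4=2c_2^2-4c_4$ and combining with the known $\beta_\Theta$ formula. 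The only difference is packaging: you frame the reduction as IHX graph combinatorics, whereas the thesis runs the Chern--Weil side directly; both encode the same identity $\Theta_2\leftrightarrow -\tfrac{2}{5}s_4$ in the graph/weight-system algebra. Your remark that the connected graph $\Theta_2$ should pair with a Newton power sum (the degree-four term of $\log\hat A^{1/2}$) is precisely this statement.
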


Theorem~17 of Sawon~\cite{sawon00} also gives the following.

\begin{thm}
For $k>2$ and $M$ an irreducible hyperk{\"a}hler manifold of dimension $2n$, the invariant $b_{\Theta^{n-k}\Theta_k}(M)$ is a rational function of Chern numbers.
\end{thm}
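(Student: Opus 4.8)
The factorization principle recorded above gives both
$$b_{\Theta^{n-k}\Theta_k}(M)=\frac{n!}{(2\pi^2)^n}\beta_\Theta^{\,n-k}\beta_{\Theta_k}\,\mathrm{vol}M\qquad\mathrm{and}\qquad b_{\Theta^n}(M)=\frac{n!}{(2\pi^2)^n}\beta_\Theta^{\,n}\,\mathrm{vol}M,$$
and dividing one by the other cancels the volume and the overall constant, leaving
$$b_{\Theta^{n-k}\Theta_k}(M)=b_{\Theta^n}(M)\cdot\frac{\beta_{\Theta_k}}{\beta_\Theta^{\,k}}.$$
Since $b_{\Theta^n}(M)=48^n n!\,\hat{A}^{1/2}[M]$ is a Chern number, and is strictly positive and hence nonzero, it is enough to show that the dimensionless ratio $\beta_{\Theta_k}/\beta_\Theta^{\,k}$ is a rational function of Chern numbers: multiplying back by $b_{\Theta^n}(M)$ then proves the claim, and the only denominators that can be introduced are powers of the nonzero Chern number $b_{\Theta^n}(M)$, which is what makes ``rational function of Chern numbers'' a meaningful assertion.

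\textbf{Weight-zero coordinates and the characteristic-class dictionary.} For each connected trivalent graph $\Gamma$ with $|\Gamma|$ vertices set $x_\Gamma:=\beta_\Gamma/\beta_\Theta^{\,|\Gamma|/2}$, a scale-invariant quantity with $x_\Theta=1$; the target is precisely $x_{\Theta_k}=\beta_{\Theta_k}/\beta_\Theta^{\,k}$. Every Chern number of $M$ is, by Chern--Weil theory, a universal polynomial in the pointwise curvature, and under the identification of $T$ with $\Omega^1$ via $\sigma$ this curvature is encoded by $\Phi$. By Kapranov's description of the Rozansky--Witten weight system together with the factorization principle (the same mechanism that yields $b_{\Theta^n}=\mathrm{const}\cdot\hat{A}^{1/2}[M]$), each Chern number $C_\mu[M]$ therefore satisfies $C_\mu[M]/b_{\Theta^n}(M)=\widetilde P_\mu(x_\Gamma)$ for a universal weight-zero polynomial $\widetilde P_\mu$ in the coordinates $x_\Gamma$.

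\textbf{Inversion.} The core of the argument is to solve the polynomial system $\{\,C_\mu[M]/b_{\Theta^n}(M)=\widetilde P_\mu(x_\Gamma)\,\}$ for the single coordinate $x_{\Theta_k}$ as a rational function of the Chern-number ratios $C_\mu[M]/b_{\Theta^n}(M)$. This is where the dichotomy between $k=2$ and $k>2$ originates. For $k=2$ the relevant characteristic combination (built from the $c_2$-term of $\hat{A}^{1/2}$ and products) is affine-linear in $x_{\Theta_2}$, so a single linear step suffices; after clearing the denominator the Chern-number ratios recombine into genuine Chern numbers and one recovers the linear formula quoted above for $b_{\Theta^{n-2}\Theta_2}$. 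For $k>2$ the system is genuinely nonlinear, so isolating $x_{\Theta_k}$ forces back-substitution of the previously determined coordinates and division, and $x_{\Theta_k}$ emerges as a proper rational function; multiplying by $b_{\Theta^n}(M)$ then gives $b_{\Theta^{n-k}\Theta_k}(M)$ as a rational function of Chern numbers.

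\textbf{The main obstacle.} The hardest point is to show that $x_{\Theta_k}$ is determined \emph{at all} by the characteristic-class data, that is, the birational solvability of the system. Since for larger vertex counts there are connected trivalent graphs other than the necklaces $\Theta_k$ (for instance at eight vertices there is a second graph besides $\Theta_4$), a priori there are more independent curvature invariants $x_\Gamma$ than Chern numbers, and one must show that the necklace coordinate $x_{\Theta_k}$ nonetheless lies in the subalgebra pinned down by the Chern numbers. I expect this to require (i) the IHX relations in the graph algebra, to cut down the number of independent invariants, and (ii) a check that enough Chern numbers of the correct weighted degree occur, with nonvanishing leading coefficients, to solve for $x_{\Theta_k}$. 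The hyperk\"ahler symmetries of $\Phi$, namely its values in $\mathrm{Sym}^3\Omega^1$, and Kapranov's identification of the weight system with the characteristic-class machinery are exactly what should force this solvability, and carefully tracking the denominators through the inversion is what guarantees a rational rather than merely algebraic dependence on the Chern numbers.
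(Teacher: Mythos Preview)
The paper itself gives no proof of this theorem: it is simply quoted as ``Theorem~17 of Sawon~\cite{sawon00}'', i.e.\ the author's thesis, so there is no in-paper argument to compare against. What can be gleaned from Section~7 of the paper is that the thesis approach is \emph{direct}: Chern--Weil theory yields explicit identities such as
\[
\tfrac{5}{2}\,\beta_{\Theta_2}=\frac{(8\pi^2)^2 n(n-1)\int_M(-s_4)\,\sigma^{n-2}\bar\sigma^{n-2}}{\int_M\sigma^n\bar\sigma^n},
\]
expressing each $\beta_{\Theta_k}$ in terms of integrals $\int_M s_{2j}\,\sigma^{n-k}\bar\sigma^{n-k}$, and the remaining work is to convert such ``mixed'' integrals into genuine Chern numbers. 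This uses the specific combinatorics of the necklace graphs $\Theta_k$ (they are built from the $2$-wheel pieces, so their weight is governed by Chern characters), not a general inversion argument.

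Your proposal, by contrast, sets up the correct scale-invariant coordinates $x_\Gamma=\beta_\Gamma/\beta_\Theta^{|\Gamma|/2}$ and correctly observes that every Chern number is a universal polynomial in these, but then stops precisely at the substantive step. You write that the ``hardest point'' is the birational solvability of the system for $x_{\Theta_k}$, and that you ``expect'' IHX relations plus a dimension count to do the job --- but you do not carry this out. That is a genuine gap, not a routine omission: as the paper's Remark after the theorem notes, already in dimension eight there are connected trivalent graphs (the cube skeleton $\Xi$) whose invariants are \emph{not} linear combinations of Chern numbers, and the number of independent graph invariants exceeds the number of Chern numbers. So the generic ``invert the system'' strategy cannot succeed for arbitrary $x_\Gamma$; something special about $\Theta_k$ is required, and your outline does not supply it. The honest statement is that you have reduced the theorem to exactly the content of Theorem~17 of \cite{sawon00}, without reproving it.
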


\begin{remark}
Up to dimension six all Rozansky-Witten invariants are linear combinations of Chern numbers; in particular, this is true for $b_{\Theta_3}(M)$. However, this is no longer true for a general Rozansky-Witten invariant in higher dimensions. For example, Theorem~22 of Sawon~\cite{sawon00} (summarized in Section~5 of Sawon~\cite{sawon01}) says that in dimension eight $b_{\Theta_2^2}(M)$ is not a linear combination of Chern numbers, and it follows that nor is $b_{\Theta\Theta_3}(M)$.
\end{remark}

We end this section by briefly recalling Rozansky and Witten's original formulation, which will be used in the calculations of Section~4. If we denote by $TM$ the real tangent bundle of a hyperk{\"a}hler manifold $M$, then there is a decomposition
$$TM\otimes_{\mathbb{R}}\mathbb{C}=V\otimes S$$
into the tensor product of a (complex) rank $2n$ vector bundle $V$ and a trivializable rank $2$ vector bundle $S$. The Levi-Civita connection on $TM\otimes_{\mathbb{R}}\mathbb{C}$ reduces to an $\mathrm{Sp}(n)$-connection on $V$, and the holomorphic symplectic form $\sigma$ and Riemann curvature tensor $\Phi$ are equivalent to a symplectic form $\epsilon$ on $V$ and a section $\Omega$ of $\mathrm{Sym}^4V$. Rozansky and Witten take copies of $\epsilon$ and $\Omega$ and contract indices according to the trivalent graph, then integrate to produce $b_{\Gamma}(M)$, much like before (see equations~3.41 and 3.43 of~\cite{rw97}). For example, for $\Gamma=\Theta$ in dimension $2n=2$ we would take
$$\Omega_{ABCD}\Omega_{EFGH}\epsilon^{AE}\epsilon^{BF}\epsilon^{CG}.$$
Anti-symmetrizing over the free indices $D$ and $H$ is basically equivalent to contracting with the skew form $\epsilon^{DH}$, and thus
$$b_{\Theta}(M)=\frac{1}{(2\pi)^2}\int_M\Omega_{ABCD}\Omega_{EFGH}\epsilon^{AE}\epsilon^{BF}\epsilon^{CG}\epsilon^{DH}d\mathrm{vol}.$$
For $\Gamma=\Theta_2$ in dimension $2n=4$ we would take 
$$\Omega_{ABCD}\Omega_{EFGH}\Omega_{IJKL}\Omega_{MNOP}\epsilon^{AE}\epsilon^{BF}\epsilon^{IM}\epsilon^{JN}\epsilon^{CK}\epsilon^{OG}.$$
Anti-symmetrizing over the free indices $D$, $H$, $L$, and $P$ is basically equivalent to contracting with the totally skew form
$$\epsilon^{DHLP}:=\epsilon^{DH}\epsilon^{LP}+\epsilon^{DL}\epsilon^{PH}+\epsilon^{DP}\epsilon^{HL},$$
and thus
$$b_{\Theta_2}(M)=\frac{1}{(2\pi)^4}\int_M\Omega_{ABCD}\Omega_{EFGH}\Omega_{IJKL}\Omega_{MNOP}\epsilon^{AE}\epsilon^{BF}\epsilon^{IM}\epsilon^{JN}\epsilon^{CK}\epsilon^{OG}\epsilon^{DHLP}d\mathrm{vol}.$$

\begin{remark}
It is interesting that the final contractions with $\epsilon^{DH}$ and $\epsilon^{DHLP}$ mean that we are really using a $4$-valent graph to contract indices. For $b_{\Theta}(M)$ this would be the $4$-valent graph
$$\twoWgraph,$$
while for $b_{\Theta_2}(M)$ it would be the sum of the $4$-valent graphs
$$\fourWgraphI+\fourWgraphII+\fourWgraphIII.$$
These $4$-valent graphs are really encoding the invariant theory of $\mathrm{Sp}(n)$-representations. The bundle $V$ corresponds to the standard representation $\mathbb{C}^{2n}$ of $\mathrm{Sp}(n)$, whereas $\Omega$ is a section of $\mathrm{Sym}^4V$ which corresponds to $\mathrm{Sym}^4\mathbb{C}^{2n}$. If we decompose $\mathrm{Sym}^2(\mathrm{Sym}^4\mathbb{C}^{2n})$ into irreducible $\mathrm{Sp}(n)$-representations, we find the trivial representation appears with multiplicity one. Indeed, we can identify $\mathbb{C}^{2n}$ with its dual using the symplectic form, and then the invariant subspace of $\mathrm{Sym}^2(\mathrm{Sym}^4\mathbb{C}^{2n})$ is
$$\mathrm{Sym}^2(\mathrm{Sym}^4\mathbb{C}^{2n})^{\mathrm{Sp}(n)}\cong \mathrm{Hom}_{\mathrm{Sp}(n)}(\mathrm{Sym}^4\mathbb{C}^{2n},\mathrm{Sym}^4\mathbb{C}^{2n}),$$
which is one-dimensional by Schur's lemma applied to the {\em irreducible\/} representation $\mathrm{Sym}^4\mathbb{C}^{2n}$. This means there is a unique quadratic invariant of $\Omega$, which gives $b_{\Theta}(M)$.

The decomposition of $\mathrm{Sym}^4(\mathrm{Sym}^4\mathbb{C}^{2n})$ contains four copies of the trivial representation\footnote{We use the form interface to LiE at wwwmathlabo.univ-poitiers.fr/$\sim$maavl/LiE/form.html.}, and thus there are four quartic invariants. These corresponding to the graphs
$$\twoWgraph\cup\twoWgraph\qquad\qquad\fourWgraphI\qquad\qquad\fourWgraphII\qquad\qquad\fourWgraphIII.$$
The first gives the Rozansky-Witten invariant $b_{\Theta^2}(M)$, and the sum of the other three gives $b_{\Theta_2}(M)$. In fact, each individual graph gives a way of contracting indices and thus produces a curvature integral, but it is not clear what these individual curvature integrals represent. Although Conjecture~\ref{main} states that $b_{\Theta_2}(M)<0$ in dimension four, and $\beta_{\Theta_2}<0$ in higher dimensions, we do not expect that these individual curvature integrals are all negative. Indeed, computations like those in Section~4 suggest that they are not (more precisely, computations suggest that the three terms in the integrand for $\beta_{\Theta_2}$ are not individually pointwise negative; only their sum is pointwise negative).

There is also a unique cubic invariant of $\Omega$; it corresponds to the graph
$$\threeWgraph$$
and looks like
$$\Omega_{ABCD}\Omega_{EFGH}\Omega_{IJKL}\epsilon^{AE}\epsilon^{BF}\epsilon^{CI}\epsilon^{DJ}\epsilon^{GK}\epsilon^{HL}.$$
Again, it is not clear what the resulting curvature integral represents. For more relations between graphs and the invariant theory of $\mathrm{Sp}(n)$, see Garoufalidis and Nakamura~\cite{gn98}.
\end{remark}

\section{Dimension four}

In this section we prove Conjecture~\ref{main} in dimension four.

\begin{thm}
Let $M$ be an irreducible hyperk{\"a}hler manifold of complex dimension four. Then
$$b_{\Theta_2}(M)<0.$$
\end{thm}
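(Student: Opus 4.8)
The plan is to reduce the statement $b_{\Theta_2}(M)<0$ for a four-dimensional hyperk{\"a}hler manifold to a statement about Chern numbers, and then to bound those Chern numbers using the topology of $M$. By the theorem quoted above, in dimension four we have
$$b_{\Theta_2}(M)=-\frac{4}{5}(c_2^2-2c_4)[M],$$
so the claim is equivalent to the inequality $(c_2^2-2c_4)[M]>0$. Thus the entire problem becomes one of showing that this particular combination of Chern numbers is strictly positive for every irreducible hyperk{\"a}hler fourfold.

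First I would translate the Chern numbers $c_2^2[M]$ and $c_4[M]$ into Betti-number (or Hodge-number) data. For an irreducible hyperk{\"a}hler fourfold the Hodge diamond is highly constrained: $h^{2,0}=1$, so the only free topological parameters are essentially $b_2$ and $b_4$ (equivalently $b_2$ and the middle Hodge numbers), and there are classical relations — such as $c_4[M]=\chi(M)$ being the topological Euler characteristic, and a Riemann-Roch / Hirzebruch-type expression for $c_2^2[M]$ — that let one write both Chern numbers as explicit affine-linear functions of the Betti numbers. This is exactly where I would invoke the work of Salamon~\cite{salamon96} and Verbitsky~\cite{verbitsky96}, which give linear relations among the Betti numbers of hyperk{\"a}hler manifolds, together with Guan's~\cite{guan01} analysis that pins down the possible Betti numbers of a four-dimensional hyperk{\"a}hler manifold to a finite explicit list (or a one-parameter family with explicit constraints). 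Substituting these into $c_2^2-2c_4$ should yield a single expression in, say, $b_2$.

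The main step is then to show the resulting expression is positive on the entire allowed range. Because Guan's results bound $b_2$ (famously $b_2\leq 23$, with only a few admissible values), I expect the final verification to reduce to checking positivity of a quadratic or linear function of $b_2$ over a short finite list of integers, which is a routine finite check rather than a hard inequality. The delicate part — and the place I expect the real obstacle to lie — is assembling the correct dictionary between $(c_2^2[M],c_4[M])$ and the Betti numbers: one must use the Riemann-Roch formula on the hyperk{\"a}hler fourfold, the fact that $\hat{A}^{1/2}[M]=b_{\Theta^2}(M)/(48^2\cdot 2!)>0$ from the theorem above gives one positive combination $7c_2^2-4c_4$, and Salamon's relation supplies a second. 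Combining these two linear constraints with the numerical bound on $b_2$ is what forces $c_2^2-2c_4>0$, and getting the coefficients exactly right is the crux.

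Concretely, I would proceed as follows: (i) record $c_4[M]=\chi(M)=\sum(-1)^i b_i$ and express $\chi$ via the Hodge numbers; (ii) use $\hat{A}^{1/2}[M]>0$ to get $(7c_2^2-4c_4)[M]>0$; (iii) use Salamon's linear relation among Betti numbers to obtain a second independent equation relating $c_2^2[M]$ and $c_4[M]$ to $b_2$; (iv) solve the two relations to write $(c_2^2-2c_4)[M]$ purely in terms of $b_2$; and (v) invoke Guan's bound on $b_2$ to conclude positivity by a finite check. If the two positivity inputs already pin the sign without recourse to the explicit bound on $b_2$, so much the better, but I anticipate needing Guan's numerical constraint to close the final case.
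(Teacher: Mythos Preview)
Your approach is essentially the same as the paper's: write $b_{\Theta_2}(M)$ in terms of Chern numbers, convert to Betti numbers via Riemann--Roch and Salamon's relation, and then apply Guan's constraints to verify the sign by a finite check. One small correction: after using all available relations the expression is not ``purely in terms of $b_2$'' but rather $b_{\Theta_2}(M)=-512+16b_2-4b_3$, so both $b_2$ and $b_3$ remain free and you need Guan's joint bounds on $(b_2,b_3)$ (the maximum occurs at $b_2=23$, $b_3=0$, giving $b_{\Theta_2}(M)\leq -144$).
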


\begin{proof}
In dimension four we can write the Rozansky-Witten invariants in terms of Chern numbers, which can be written in terms of Betti numbers. Then we can apply Guan's bounds on the Betti numbers~\cite{guan01}. Specifically, from Sawon~\cite{sawon00} (or Hitchin and Sawon~\cite{hs01}) we have
$$\begin{array}{rcl}
b_{\Theta^2}(M) & = & \frac{4}{5}(7c_2^2-4c_4)[M] \\
b_{\Theta_2}(M) & = & -\frac{4}{5}(c_2^2-2c_4)[M], \\
\end{array}$$
and from Section~8 of~\cite{sawon08} we have
$$\begin{array}{rcl}
c_2^2[M] & = & 736+4b_2-b_3 \\
c_4[M] & = & 48+12b_2-3b_3. \\
\end{array}$$
Therefore
$$b_{\Theta_2}(M)=-512+16b_2-4b_3.$$
Guan~\cite{guan01} proved that $b_2$ and $b_3$ can take only finitely many values; the maximum value of $b_{\Theta_2}(M)$ occurs when $b_2=23$ and $b_3=0$, and thus
$$b_{\Theta_2}(M)\leq -144.$$
\end{proof}

\begin{remark}
We also have
$$b_{\Theta^2}(M)=3968-16b_2+4b_3,$$
whose minimum value again occurs when $b_2=23$ and $b_3=0$, and thus
$$b_{\Theta^2}(M)\geq 3600.$$
These values are realized by the Hilbert scheme $S^{[2]}$, and the bounds are therefore sharp. The maximum value occurs when $b_2=3$ and $b_3=68$ (not realized by any currently known hyper{\"a}hler manifold), and thus
$$b_{\Theta^2}(M)\leq 4192.$$
In terms of $\hat{A}^{1/2}[M]=\frac{1}{48^22!}b_{\Theta^2}(M)$ these bounds become
$$\frac{25}{32}\leq\hat{A}^{1/2}[M]\leq\frac{131}{144}.$$
\end{remark}

\section{``Experimental proof''}

The Maple code for our ``experimental proof'' of Conjecture~\ref{main} is in Appendix A, and a dictionary for the variables is in the table below.

\begin{table}[ht]
\begin{center}
\begin{tabular}{|l|l|}
  \hline
$E[i,j]$ & $\epsilon^{IJ}$ \\
$EE[i,j,k,l]$ & $\epsilon^{IJKL}:=\epsilon^{IJ}\epsilon^{KL}+\epsilon^{IK}\epsilon^{LJ}+\epsilon^{IL}\epsilon^{JK}$ \\
$EEE[i,j,k,l,m,p]$ & $\epsilon^{IJKLMP}$ \\
$M[i,j,k,l]$ & $\Omega_{IJKL}$ \\
$V[i,j,k,l]$ & $\Psi_{IJKL}:=\Omega_{ABIJ}\Omega_{CDKL}\epsilon^{AC}\epsilon^{BD}$ \\
$Th$ & $\Theta(\Omega)_{JL}\epsilon^{JL}=\Psi_{IJKL}\epsilon^{IK}\epsilon^{JL}=\Omega_{ABIJ}\Omega_{CDKL}\epsilon^{AC}\epsilon^{BD}\epsilon^{IK}\epsilon^{JL}$ \\
$W[i,j,k,l]$ & $\Psi_{AIBJ}\Psi_{CKDL}\epsilon^{AC}\epsilon^{DB}$ \\
$Th2$ & $\Theta_2(\Omega)_{IJKL}\epsilon^{IJKL}=\Psi_{AIBJ}\Psi_{CKDL}\epsilon^{AC}\epsilon^{DB}\epsilon^{IJKL}$ \\
$U[i,j,k,l,m,p]$ & $\Psi_{AIBJ}\Psi_{CKDL}\Psi_{EMFP}\epsilon^{AD}\epsilon^{CF}\epsilon^{EB}$ \\
$Th3$ & $\Theta_3(\Omega)_{IJKLMP}\epsilon^{IJKLMP}=\Psi_{AIBJ}\Psi_{CKDL}\Psi_{EMFP}\epsilon^{AD}\epsilon^{CF}\epsilon^{EB}\epsilon^{IJKLMP}$ \\
 & \\
  \hline
\end{tabular}
\end{center}
\caption{Dictionary for Maple code}
\end{table}

The first step is to specify the dimension (in the given code we select $2n=6$). In a suitable basis we can assume that $\epsilon^{IJ}$ is the standard skew form, so we define $E[i,j]$ accordingly. Similarly, $EE[i,j,k,l]$ and $EEE[i,j,k,l,m,p]$ are defined to be the standard totally antisymmetric forms.

Next we generate a random multi-array of complex numbers to represent the curvature tensor $\Omega_{IJKL}$ of a hyperk{\"a}hler manifold. We first set $R[i,j,k,l]$ to be a totally symmetric form with random complex entries satisfying $-1\leq \mathcal{R}e\leq 1$ and $-1\leq\mathcal{I}m\leq 1$. The curvature tensor $\Omega_{IJKL}$ must satisfy a reality condition, and thus we define $M[i,j,k,l]$ by adding $R[i,j,k,l]$ to its conjugate under the appropriate reality condition.

Next we take copies of $M[i,j,k,l]$ and $E[i,j]$ and contract indices according to a trivalent graph $\Gamma$. There is one intermediate step: we compute $V[i,j,k,l]$, which corresponds to the tensor
$$\Psi_{IJKL}:=\Omega_{ABIJ}\Omega_{CDKL}\epsilon^{AC}\epsilon^{BD}$$
given by contracting according to the unitrivalent graph $\twowheel$. This piece appears in all of our graphs $\Theta$, $\Theta_2$, and $\Theta_3$, etc., and by computing $V[i,j,k,l]$ once and reusing it in later computations we can speed up the algorithm considerably.

The code outputs the values $Th$, $Th2$, and $Th3$. We can iterate this computation hundreds of times and verify that $Th$ and $Th3$ are always positive and $Th2$ is always negative. Of course, we know that $Th$ must be positive because at each point of $M$
$$\Omega_{ABCD}\Omega_{EFGH}\epsilon^{AE}\epsilon^{BF}\epsilon^{CG}\epsilon^{DH},$$
is the $L^2$-norm of the curvature tensor, and likewise $Th$ is the $L^2$-norm of $M[i,j,k,l]$. As observed earlier, this implies that $b_{\Theta}(M)>0$ in two dimensions, and $\beta_{\Theta}>0$ in arbitrary dimension, because up to a universal positive constant $\beta_{\Theta}$ is the integral over $M$ of 
$$\Omega_{ABCD}\Omega_{EFGH}\epsilon^{AE}\epsilon^{BF}\epsilon^{CG}\epsilon^{DH}.$$
Similarly, the computational verification that $Th2$ is always negative suggests that
$$\Omega_{ABCD}\Omega_{EFGH}\Omega_{IJKL}\Omega_{MNOP}\epsilon^{AE}\epsilon^{BF}\epsilon^{IM}\epsilon^{JN}\epsilon^{CK}\epsilon^{OG}$$
should be a pointwise negative function on $M$. This would imply that $b_{\Theta_2}(M)<0$ in four dimensions, and $\beta_{\Theta_2}<0$ in arbitrary dimension. Finally, the verification that $Th3$ is always positive suggests that $b_{\Theta_3}(M)>0$ in six dimensions, and $\beta_{\Theta_3}>0$ in arbitrary dimension.

\begin{remark}
The value of this ``experimental proof'' is that it suggests an approach to a rigorous proof of the conjecture. Namely, it suggests that these particular Rozansky-Witten invariants are positive (respectively, negative) because their integrands are pointwise positive (respectively, negative), and moreover, that this should be the consequence of a purely algebraic computation. If successful, this would mean that we could obtain valuable topological information about hyperk{\"a}hler manifolds from purely algebraic identities.
\end{remark}

\section{Dimensions six and eight}

In this section we verify Conjecture~\ref{main} for all currently known hyperk{\"a}hler manifolds in complex dimensions six and eight. In dimension six these are the Hilbert scheme of points on a K3 surface $S^{[3]}$, the generalized Kummer variety $K_3(A)$ (see Beauville~\cite{beauville83}), and O'Grady's example $\mathrm{OG6}$~\cite{ogrady03}. In dimension eight these are $S^{[4]}$ and $K_4(A)$. The Rozansky-Witten invariants of $S^{[n]}$ and $K_n(A)$ up to dimension eight were calculated in Sawon~\cite{sawon00} (see Appendix~E.1); the results are reproduced below.
$$
\begin{array}{|c|c|c|c|c|}
\hline
2n & & S^{[n]} & K_n(A) & \mathrm{OG6} \\
\hline
2 & b_{\Theta}     & 48           & & \\
4 & b_{\Theta^2}               & 3600         & 3888 & \\
  & b_{\Theta_2}                & -144         & -432 & \\
6 & b_{\Theta^3}               & 373248       & 442368 & \hspace*{5mm}442368\hspace*{5mm} \\
  & b_{\Theta\Theta_2}      & -13824       & -36864 & -36864 \\
  & b_{\Theta_3}               & 512          & 2560 & 3072 \\
8 & b_{\Theta^4}               & 49787136     & 64800000 & \\
  & b_{\Theta^2\Theta_2}   & \hspace*{5mm}-1693440\hspace*{5mm}    & \hspace*{5mm}-4320000\hspace*{5mm} & \\
  & b_{\Theta_2^2}             & 57600        & 288000 & \\
  & b_{\Theta\Theta_3}       & 56448        & 240000 & \\
  & b_{\Theta_4}                & -1824        & -12000 & \\
  & b_{\Xi}             & 348          & -1500 & \\
\phantom{n} & & &&  \\
\hline
\end{array}
$$
The last invariant in dimension eight, $b_{\Xi}$, corresponds to the trivalent graph which is the skeleton of a cube,
$$\Xi=\eightVgraphII.$$
We have also included the invariants of $\mathrm{OG6}$ in this table; let us explain how they are calculated. The Hodge numbers were calculated by Mongardi, Rapagnetta, and Sacc{\`a}~\cite{mrs18}; they are
\begin{center}
\scalebox{0.95}{
$\begin{array}{ccccccccccccc}
 & & & & & & 1 & & & & & & \\
 & & & & & 0 & & 0 & & & & & \\
 & & & & 1 & & 6 & & 1 & & & & \\
 & & & 0 & & 0 & & 0 & & 0 & & & \\
 & & 1 & & 12 & & 173 & & 12 & & 1 & & \\
 & 0 & & 0 & & 0 & & 0 & & 0 & & 0 & \\
1 & & 6 & & 173 & & 1144 & & 173 & & 6 & & 1 \\
 & 0 & & 0 & & 0 & & 0 & & 0 & & 0 & \\
 & & 1 & & 12 & & 173 & & 12 & & 1 & & \\
 & & & 0 & & 0 & & 0 & & 0 & & & \\
 & & & & 1 & & 6 & & 1 & & & & \\
 & & & & & 0 & & 0 & & & & & \\
 & & & & & & 1 & & & & & & \\
 \end{array}.$
}
\end{center}
From this we can compute the Hirzebruch $\chi_y$-genus
$$\chi_y=\sum_{p,q=0}^6(-1)^qh^{p,q}y^p=4-24y+348y^2-1168y^3+348y^4-24y^5+4y^6.$$ 
By Riemann-Roch, the coefficients $\chi^p=\sum_{q=0}^6(-1)^qh^{p,q}$ can be written in terms of Chern numbers, and these relations can be inverted to give
$$\begin{array}{rcccl}
c_2^3[M] & = & 7272\chi^0-184\chi^1-8\chi^2 & = & 30720 \\ 
c_2c_4[M] & = & 1368\chi^0-208\chi^1-8\chi^2 & = & 7680 \\ 
c_6[M] & = & 36\chi^0-16\chi^1+4\chi^2 & = & 1920 \\ 
\end{array}$$
(see Appendix~B of~\cite{sawon00}). Finally, in dimension six we can compute all Rozansky-Witten invariants in terms of Chern numbers, and we find
$$\begin{array}{rcccl}
b_{\Theta^3}(M) & = & \frac{24}{35}(31c_2^3-44c_2c_4+16c_6)[M] & = & 442368 \\ 
b_{\Theta\Theta_2}(M) & = & -\frac{8}{35}(11c_2^3-26c_2c_4+12c_6)[M] & = & -36864 \\ 
b_{\Theta_3}(M) & = & \frac{8}{35}(c_2^3-3c_2c_4+3c_6)[M] & = & 3072. \\ 
\end{array}$$

\begin{remark}
It is perhaps surprising that some of the Rozansky-Witten invariants of $K_3(A)$ and $\mathrm{OG6}$ are the same. Here is a possible explanation for this. In dimension ten, there is a relation between $S^{[5]}$ and O'Grady's example $\mathrm{OG10}$: they can be deformed to Lagrangian fibrations that are locally isomorphic away from singular fibres. More precisely, on the complement of the singular fibres one fibration is a torsor over the other (see Sawon~\cite{sawon04}, or de Cataldo, Rapagnetta, and Sacc{\`a}~\cite{drs21} where this relation was used to calculate the Hodge numbers of $\mathrm{OG10}$). In particular, the two fibrations have the same discriminant locus and their fibres have the same polarization type. By Theorem~2 of Sawon~\cite{sawon08}, $\sqrt{\hat{A}}[M]$ (equivalently, $b_{\Theta^n}(M)$) can be calculated from the degree of the discriminant locus and the polarization type, and therefore $b_{\Theta^5}(S^{[5]})=b_{\Theta^5}(\mathrm{OG10})$.

We expect a similar relation between $K_3(A)$ and $\mathrm{OG6}$, i.e., after deforming to Lagrangian fibrations, one is a (compactified) torsor over the other. This would explain why $b_{\Theta^3}(K_3(A))=b_{\Theta^3}(\mathrm{OG6})$. The relation $b_{\Theta\Theta_2}(M)=2^{12}3^3-\frac{1}{3}b_{\Theta^3}(M)$ (proved in the next section) then gives $b_{\Theta\Theta_2}(K_3(A))=b_{\Theta\Theta_2}(\mathrm{OG6})$ too.
\end{remark}

\section{Applications}

Our first goal in this section is to prove the following inequality involving Rozansky-Witten invariants and the second Betti number. This generalizes Lemma~3 of Guan~\cite{guan01} to arbitrary dimension, and was known to Guan (see the Remark following Lemma~3 of~\cite{guan01}) but never published.

\begin{thm}
\label{b2inequality}
Let $M$ be an irreducible hyperk{\"a}hler manifold of complex dimension $2n$. Then
$$b_{\Theta^n}(M)\geq
-(b_2+2n-2)b_{\Theta^{n-2}\Theta_2}(M).$$
\end{thm}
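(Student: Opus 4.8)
The plan is to separate the algebra from the topology by first reducing to a relation between the two universal constants $\beta_\Theta$ and $\beta_{\Theta_2}$. Using the factorisation recorded just before Conjecture~\ref{main}, both invariants share the common positive factor $C=\frac{1}{(8\pi^2)^nn!}\beta_\Theta^{n-2}\int_M\sigma^n\bar\sigma^n$, so that $b_{\Theta^n}(M)=C\beta_\Theta^2$ and $b_{\Theta^{n-2}\Theta_2}(M)=C\beta_{\Theta_2}$; since $\beta_\Theta>0$ we have $C>0$. The asserted inequality is therefore equivalent to the dimension-free statement
$$\beta_\Theta^2+(b_2+2n-2)\beta_{\Theta_2}\geq 0,$$
in which only the two numbers $\beta_\Theta,\beta_{\Theta_2}$ and the second Betti number appear.

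Next I would realise $\beta_\Theta$ and $\beta_{\Theta_2}$ as traces of a single self-adjoint operator. Using the reality condition on $\Omega$ together with the quaternionic structure underlying the reduction to $\mathrm{Sp}(n)$ (Section~2), the pointwise curvature becomes a self-adjoint endomorphism $\hat\Omega$ of $\mathrm{Sym}^2V$ for the positive-definite Hermitian form induced by $\epsilon$ and the quaternionic structure; hence its square $\hat\Psi=\hat\Omega^2$ is positive semidefinite, which is the origin of the sign pattern in Conjecture~\ref{main}. Here $\mathrm{tr}\,\hat\Psi$ is a positive multiple of the pointwise $L^2$-norm of the curvature, which integrates to $\beta_\Theta$ by the Hitchin--Sawon Proposition, while the quartic contraction defining $\Theta_2$ (the sum of the three four-valent graphs of the Remark in Section~2) is a quadratic trace expression in $\hat\Psi$.

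The constant $b_2+2n-2$ must come from cohomology, since the pointwise fibre $\mathrm{Sym}^2V$ has dimension $n(2n+1)$, unrelated to $b_2$. The mechanism I would use is that the relevant curvature integrals reduce, by Fujiki-type relations, to expressions in the Beauville--Bogomolov form $q$ on $H^2(M)$; extracting these as traces against the dual of $q$ replaces the fibrewise trace by a trace over $H^2$, whose rank is $b_2$, while the homogeneity of the contracted monomial in $q$ contributes the combinatorial shift $2n-2$ (so that the identity trace $\mathrm{tr}(q^{-1}q)=b_2$ and the degree $2n-2$ combine to $b_2+2n-2$). The outcome should be a presentation $\beta_\Theta=\mathrm{tr}\,Q$ and $\beta_{\Theta_2}=\mu\,\mathrm{tr}(Q^2)+\nu(\mathrm{tr}\,Q)^2$ for a self-adjoint operator $Q$ on a space of dimension $b_2+2n-2$, chosen so that $\beta_\Theta^2+(b_2+2n-2)\beta_{\Theta_2}$ is a positive multiple of the Cauchy--Schwarz defect $(b_2+2n-2)\,\mathrm{tr}(Q^2)-(\mathrm{tr}\,Q)^2$. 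The latter is non-negative for any self-adjoint $Q$ on a $(b_2+2n-2)$-dimensional space, and the reality condition of the previous step is exactly what guarantees $Q$ has real spectrum, closing the argument.

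The hard part will be the cohomological reduction: carrying the quartic invariant $\beta_{\Theta_2}$ all the way down to the Beauville--Bogomolov form and pinning the constant to be exactly $b_2+2n-2$ rather than some nearby value. This amounts to showing that the three four-valent graphs summing to $\Theta_2$ recombine into precisely the trace data $\mathrm{tr}(Q^2)$ and $(\mathrm{tr}\,Q)^2$ with the coefficients that make the Cauchy--Schwarz defect appear; the positive-semidefiniteness of $\hat\Psi$ controls the sign once these coefficients are known. As a consistency check I would specialise to complex dimension four, where the statement is Guan's Lemma~3~\cite{guan01} and the reduction is visible through the known expressions of $c_2^2[M]$ and $c_4[M]$ in terms of $b_2$ and $b_3$.
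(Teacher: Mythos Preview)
Your opening reduction is correct and matches the paper: dividing through by the common positive factor $C=\frac{1}{(8\pi^2)^nn!}\beta_\Theta^{\,n-2}\int_M\sigma^n\bar\sigma^n$ reduces the theorem to $\beta_\Theta^2+(b_2+2n-2)\beta_{\Theta_2}\geq 0$. After that, however, the proposal is a plan with a declared gap rather than a proof, and the mechanism you sketch does not obviously lead to the result.

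The central difficulty is precisely the one you flag and do not resolve: you want a self-adjoint operator $Q$ on a space of dimension $b_2+2n-2$ with $\beta_\Theta=\mathrm{tr}\,Q$ and $\beta_{\Theta_2}$ a specific quadratic in $\mathrm{tr}\,Q$ and $\mathrm{tr}(Q^2)$, so that the inequality becomes the Cauchy--Schwarz defect $\dim\cdot\mathrm{tr}(Q^2)\geq(\mathrm{tr}\,Q)^2$. No such $Q$ is constructed, and the pointwise endomorphism $\hat\Psi$ of $\mathrm{Sym}^2V$ you start from lives on a fibre of dimension $n(2n+1)$, which, as you yourself note, is unrelated to $b_2$. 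Your suggested bridge (``extracting traces against the dual of $q$ replaces the fibrewise trace by a trace over $\H^2$'') is not an argument: $\beta_\Theta$ and $\beta_{\Theta_2}$ are single real numbers, not traces on $\H^2$, and there is no evident way to manufacture from them an operator on any $(b_2+2n-2)$-dimensional space with the required spectral content. The positive-semidefiniteness of $\hat\Psi$ is relevant to the \emph{sign} of $\beta_{\Theta_2}$ (Conjecture~\ref{main}), but the theorem here is an \emph{unconditional} inequality, valid regardless of that sign, so pointwise positivity of $\hat\Psi$ is not the right input.

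The paper's argument runs on different rails. The inequality does not come from a trace Cauchy--Schwarz but from the orthogonal decomposition $c_2=\mu q+p^{\perp}$ in $\H^4(M,\Q)=\mathrm{Sym}^2\H^2\oplus\H^4_{\mathrm{prim}}$, together with the Hodge--Riemann bilinear relations, which give $\int_M(p^{\perp})^2\sigma^{n-2}\bar\sigma^{n-2}\geq 0$ and hence
\[
\left(\int_Mc_2^2\sigma^{n-2}\bar\sigma^{n-2}\right)\left(\int_Mq\sigma^{n-1}\bar\sigma^{n-1}\right)^2\;\geq\;\left(\int_Mc_2\sigma^{n-1}\bar\sigma^{n-1}\right)^2\left(\int_Mq^2\sigma^{n-2}\bar\sigma^{n-2}\right).
\]
Chern--Weil identities convert the $c_2$-integrals into $\beta_\Theta^2$ and $\beta_\Theta^2+2\beta_{\Theta_2}$. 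The number $b_2+2n-2$ then enters, not through any operator dimension, but by explicitly evaluating $\int_Mq\,u^{2n-2}$ and $\int_Mq^2u^{2n-4}$ via the Fujiki relation in an orthonormal basis $\{e_i\}$ of $\H^2$: one finds $\lambda_q=\lambda\frac{b_2+2n-2}{2n-1}$ and $\lambda_{q^2}=\lambda\frac{(b_2+2n-2)(b_2+2n-4)}{(2n-1)(2n-3)}$, and substituting these collapses the inequality to $(b_{\Theta^n}+2b_{\Theta^{n-2}\Theta_2})(b_2+2n-2)\geq b_{\Theta^n}(b_2+2n-4)$, which rearranges to the theorem. So the two genuinely new ingredients are (i) Hodge--Riemann applied to the primitive part of $c_2$, and (ii) the closed-form Fujiki evaluation of $\int q^k u^{2n-2k}$; neither appears in your outline.
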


In this section we will denote the Beauville-Bogomolov quadratic form of $M$ by $Q\in\mathrm{Sym}^2\H^2(M,\Z)^*$, and denote its dual by $q\in\mathrm{Sym}^2\H^2(M,\Q)\subset\H^4(M,\Q)$.

\begin{lemma}
\label{inequality}
Let $u$ be an arbitrary element of $\H^2(M,\C)$. Then
$$(2n-1)(b_{\Theta^n}(M)+2b_{\Theta^{n-2}\Theta_2}(M))\left(\int_Mqu^{2n-2}\right)^2\geq
(2n-3)b_{\Theta^n}(M)\left(\int_Mq^2u^{2n-4}\right)\left(\int_Mu^{2n}\right).$$
\end{lemma}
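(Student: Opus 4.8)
The plan is to reduce all three integrals to explicit multiples of powers of the Beauville--Bogomolov value $q(u)$ by means of the generalized Fujiki relations, and to organize them as the Gram matrix of the two classes $q$ and $u^2$ in $\H^4(M,\C)$ with respect to the Hodge--Riemann pairing $\langle\alpha,\beta\rangle=\int_M\alpha\beta\,u^{2n-4}$. Indeed, with this pairing one has $\langle q,q\rangle=\int_M q^2u^{2n-4}$, $\langle q,u^2\rangle=\int_M qu^{2n-2}$, and $\langle u^2,u^2\rangle=\int_M u^{2n}$, so the Lemma is a weighted determinant inequality for this $2\times 2$ Gram matrix; the prefactors $(2n-1)$ and $(2n-3)$ strongly suggest that the weights are arranged to clear denominators coming from the Fujiki normalization, and the positivity input is a reverse Cauchy--Schwarz of Hodge-index type for the intersection form.

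To evaluate the entries I would polarize the Fujiki relation $\int_M u^{2n}=c\,q(u)^n$, where $c$ is the Fujiki constant. Replacing $u$ by $u+\sum_a t_av_a$ and expanding $c\,q(u+\sum_a t_av_a)^n$ expresses $\int_M v w\,u^{2n-2}$ and $\int_M v_1v_2v_3v_4\,u^{2n-4}$ as quadratic and quartic polynomials in $Q(u,\cdot)$ and $Q(\cdot,\cdot)$. To pass to the integrals involving $q$ I would write $q=\sum_{i,j}g^{ij}a_ia_j$ in a basis $\{a_i\}$ of $\H^2$ with Gram matrix $g_{ij}=Q(a_i,a_j)$ and inverse $g^{ij}$, and contract, using the two collapsing identities $\sum_{i,j}g^{ij}Q(a_i,a_j)=b_2$ and $\sum_{i,j}g^{ij}Q(u,a_i)Q(u,a_j)=q(u)$. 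I expect this to yield
$$\int_M qu^{2n-2}=\frac{c(b_2+2n-2)}{2n-1}\,q(u)^{n-1},\qquad\int_M q^2u^{2n-4}=\frac{c(b_2+2n-2)(b_2+2n-4)}{(2n-1)(2n-3)}\,q(u)^{n-2}.$$
Substituting these together with $\int_M u^{2n}=c\,q(u)^n$, the prefactors cancel the denominators, and since the exponent $2n-2$ is even the common factor $c^2q(u)^{2n-2}$ is nonnegative, so the whole inequality reduces to a single clean relation between $b_{\Theta^n}(M)$, $b_{\Theta^{n-2}\Theta_2}(M)$ and $b_2$ — the same relation that then delivers Theorem~\ref{b2inequality} upon choosing a test class with $q(u)>0$.

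The hard part will be twofold. First, the four-index polarization needed for $\int_M q^2u^{2n-4}$ is a genuine piece of bookkeeping: one must contract the quartic Fujiki expression against $g^{ij}g^{kl}$ and correctly sum the several ways the pairings distribute over the four indices (producing the mixed $b_2^2$, $b_2$, and $q(u)$ terms), and I would want to verify that the quadratic-in-$b_2$ factor does indeed factor as $(b_2+2n-2)(b_2+2n-4)$, which is what makes the cancellation with the $(2n-3)$ weight exact. Second, and more conceptually, I must make sure the reverse Cauchy--Schwarz is applied for an arbitrary test class $u$ rather than only a Kähler class, and with the correct signs, so that the identification of the weights $(2n-1)(b_{\Theta^n}+2b_{\Theta^{n-2}\Theta_2})$ and $(2n-3)b_{\Theta^n}$ with the curvature pairings is legitimate; this is where the Hodge index theorem and the positivity $\beta_{\Theta}>0$ must be invoked carefully, since the pairing $\langle\,\cdot\,,\cdot\,\rangle$ is indefinite and the sign of $q(u)^{n}$ varies with the parity of $n$.
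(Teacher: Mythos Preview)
There is a genuine gap. Your Fujiki computations are correct, and the formulas
\[
\int_M qu^{2n-2}=\frac{c(b_2+2n-2)}{2n-1}\,q(u)^{n-1},\qquad
\int_M q^2u^{2n-4}=\frac{c(b_2+2n-2)(b_2+2n-4)}{(2n-1)(2n-3)}\,q(u)^{n-2}
\]
are exactly what the paper derives --- but it uses them in the proof of Theorem~\ref{b2inequality}, which \emph{deduces} the $b_2$-bound \emph{from} the Lemma, not in the proof of the Lemma itself. When you substitute these formulas, the Lemma collapses precisely to $b_{\Theta^n}(M)\geq -(b_2+2n-2)\,b_{\Theta^{n-2}\Theta_2}(M)$, i.e., to Theorem~\ref{b2inequality}. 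So your reduction shows Lemma $\Longleftrightarrow$ Theorem but proves neither; you have traded the Lemma for an equivalent statement that still needs an independent argument.

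Your Gram-matrix idea cannot supply that argument, because the $2\times2$ matrix built from $q$ and $u^2$ contains no curvature data whatsoever: any Hodge-index or Cauchy--Schwarz inequality applied to it yields only a relation among the three integrals, with no Rozansky--Witten invariants in sight. The missing ingredient is the link between the invariants and the second Chern class. The paper decomposes $c_2=\mu q+p^{\perp}$ with $p^{\perp}$ in the primitive part of $\H^4$, applies Hodge--Riemann to obtain $\int_M(p^{\perp})^2\sigma^{n-2}\bar{\sigma}^{n-2}\geq 0$, and then uses the Chern--Weil identities
\[
\beta_{\Theta}=\frac{16\pi^2n\int_Mc_2\sigma^{n-1}\bar{\sigma}^{n-1}}{\int_M\sigma^n\bar{\sigma}^n},\qquad
\beta_{\Theta}^2+2\beta_{\Theta_2}=\frac{(16\pi^2)^2n(n-1)\int_Mc_2^2\sigma^{n-2}\bar{\sigma}^{n-2}}{\int_M\sigma^n\bar{\sigma}^n}
\]
to convert the resulting inequality into one involving $\beta_{\Theta}$ and $\beta_{\Theta_2}$. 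That step is what produces the specific weights $b_{\Theta^n}+2b_{\Theta^{n-2}\Theta_2}$ and $b_{\Theta^n}$; without bringing $c_2$ into the picture you cannot manufacture them.
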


\begin{proof}
The second Chern class $c_2$ of $M$ can be written as $p+p^{\perp}$, with $p\in\mathrm{Sym}^2\H^2(M,\Q)$ and $p^{\perp}$ in the primitive cohomology $\H^4_{\mathrm{prim}}(M,\Q)$. Up to scale, $q$ is the unique element of $\mathrm{Sym}^2\H^2(M,\Q)$ that remains of type $(2,2)$ with respect to all complex structures on $M$. Since $c_2$ also remains of type $(2,2)$ with respect to all complex structures, its first component $p$ must be a multiple $\mu$ of $q$, i.e.,
$$c_2=\mu q+p^{\perp}.$$
The constant $\mu$ can be determined by multiplying by $\sigma^{n-1}\bar{\sigma}^{n-1}$ and integrating
$$\int_Mc_2\sigma^{n-1}\bar{\sigma}^{n-1}=\mu\int_Mq\sigma^{n-1}\bar{\sigma}^{n-1}.$$
Moreover
$$c_2^2=\mu^2q^2+2\mu qp^{\perp}+(p^{\perp})^2.$$
Multiplying by $\sigma^{n-2}\bar{\sigma}^{n-2}$ and integrating yields
\begin{eqnarray*}
\int_Mc_2^2\sigma^{n-2}\bar{\sigma}^{n-2} & = &
\mu^2\int_Mq^2\sigma^{n-2}\bar{\sigma}^{n-2}+0+\int_M(p^{\perp})^2\sigma^{n-2}\bar{\sigma}^{n-2}
\\
 & \geq & \mu^2\int_Mq^2\sigma^{n-2}\bar{\sigma}^{n-2}. \\
\end{eqnarray*}
On the first line we have used the fact that $q\sigma^{n-2}\bar{\sigma}^{n-2}\in\mathrm{Sym}^{2n-2}\H^2(M,\Q)$ and  $p^{\perp}\in\H^4_{\mathrm{prim}}(M,\Q)$ are orthogonal, and on the second line we have used the Hodge-Riemann bilinear relations. Substituting the formula for $\mu$ into the inequality yields
$$\left(\int_Mc_2^2\sigma^{n-2}\bar{\sigma}^{n-2}\right)\left(\int_Mq\sigma^{n-1}\bar{\sigma}^{n-1}\right)^2\geq\left(\int_Mc_2\sigma^{n-1}\bar{\sigma}^{n-1}\right)^2\left(\int_Mq^2\sigma^{n-2}\bar{\sigma}^{n-2}\right).$$
The terms involving $c_2$ can be rewritten as Rozansky-Witten invariants. Specifically, Equation~8 of Hitchin and Sawon~\cite{hs01} states that
$$\beta_{\Theta}=\frac{16\pi^2n\int_Mc_2\sigma^{n-1}\bar{\sigma}^{n-1}}{\int_M\sigma^n\bar{\sigma}^n}.$$
Similarly, using Chern-Weil theory (see Section~4 of~\cite{hs01} or Chapter~2 of Sawon~\cite{sawon00}) we find that
$$\beta_{\Theta}^2+2\beta_{\Theta_2}=\frac{(16\pi^2)^2n(n-1)\int_Mc_2^2\sigma^{n-2}\bar{\sigma}^{n-2}}{\int_M\sigma^n\bar{\sigma}^n}.$$
Substituting these into the inequality and simplifying gives
$$n(\beta_{\Theta}^2+2\beta_{\Theta_2})\left(\int_Mq\sigma^{n-1}\bar{\sigma}^{n-1}\right)^2\geq (n-1)\beta_{\Theta}^2\left(\int_Mq^2\sigma^{n-2}\bar{\sigma}^{n-2}\right)\left(\int_M\sigma^n\bar{\sigma}^n\right).$$
Multiplying both sides by $\frac{1}{(8\pi^2)^nn!}\beta_{\Theta}^{n-2}\int\sigma^n\bar{\sigma}^n>0$ converts the terms involving $\beta$s into genuine Rozansky-Witten invariants,
$$n(b_{\Theta^n}(M)+2b_{\Theta^{n-2}\Theta_2}(M))\left(\int_Mq\sigma^{n-1}\bar{\sigma}^{n-1}\right)^2\geq (n-1)b_{\Theta^n}(M)\left(\int_Mq^2\sigma^{n-2}\bar{\sigma}^{n-2}\right)\left(\int_M\sigma^n\bar{\sigma}^n\right).$$
The terms involving $\sigma$ and $\bar{\sigma}$ can be rewritten so that they only depend on the combination $u:=\sigma+\bar{\sigma}$; for example
$$\int_Mq\sigma^{n-1}\bar{\sigma}^{n-1}=\frac{\int_Mq(\sigma+\bar{\sigma})^{2n-2}}
{{2n-2 \choose n-1}}.$$
After simplifying, this gives
$$(2n-1)(b_{\Theta^n}(M)+2b_{\Theta^{n-2}\Theta_2}(M))\left(\int_Mqu^{2n-2}\right)^2
\geq (2n-3)b_{\Theta^n}(M)\left(\int_Mq^2u^{2n-4}\right)\left(\int_Mu^{2n}\right).$$
Since the resulting inequality will remain true under a deformation of the complex structure, it must be true for an arbitrary element $u\in\H^2(M,\C)$.
\end{proof}

The proof of Theorem~\ref{b2inequality} now proceeds by substituting in elements $u\in\H^2(M,\C)$ that lead to the second Betti number $b_2$ appearing. 

\begin{proof}
Let $\{e_1,\ldots,e_{b_2}\}$ be an orthonormal basis for $\H^2(M,\C)$ with respect to the Beauville-Bogomolov form $Q$; then 
$$q=\sum_{i=1}^{b_2}e_i^2.$$
The Fujiki relation~\cite{fujiki87} says that there is a constant $\lambda$ depending only on $M$ such that
$$\int_Mu^{2n}=\lambda Q(u)^n$$
for all $u\in\H^2(M,\C)$. In particular, for distinct $i$, $j$, and $k$ we have
$$\int_M(e_i+te_j+se_k)^{2n}=\lambda Q(e_i+te_j+se_k)^n =\lambda(1+t^2+s^2)^n.$$
By comparing the constant, $t^{2n-2}$, $t^{2n-4}$, and $t^2s^{2n-4}$ terms we find that
$$\int_Me_i^{2n}=\lambda,\qquad\qquad{2n \choose 2}\int_Me_i^2e_j^{2n-2}=\lambda {n \choose 1},\qquad\qquad{2n \choose 4}\int_Me_i^4e_j^{2n-4}=\lambda {n \choose 2},$$
and
$${2n \choose 2,2,2n-4}\int_Me_i^2e_j^2e_k^{2n-4}=\lambda {n \choose 1,1,n-2}.$$
Now we substitute $u=\sum e_i$ into the terms in the inequality of Lemma~\ref{inequality}. Firstly,
$$\int_M\left(\sum e_i\right)^{2n}=\lambda Q\left(\sum e_i\right)^n=\lambda b_2^n,$$
Next, because $q$ remains of type $(2,2)$ under deformations of the complex structure, Corollary~23.17 of Huybrechts' notes in~\cite{ghj02} implies that there exists a constant $\lambda_q$ such that
$$\int_Mqu^{2n-2}=\lambda_qQ(u)^{n-1}$$
for all $u\in\H^2(M,\C)$. To find $\lambda_q$ we substitute $u=e_j$, which gives
\begin{eqnarray*}
\lambda_q & = & \lambda_qQ(e_j)^{n-1} \\
 & = & \int_M\left(\sum e_i^2\right)e_j^{2n-2} \\
 & = & \int_M e_j^{2n}+\sum_{i\neq j}\int_Me_i^2e_j^{2n-2} \\
 & = & \lambda+(b_2-1)\frac{\lambda {n \choose 1}}{{2n \choose 2}} \\
 & = & \lambda\left(\frac{b_2+2n-2}{2n-1}\right). \\
\end{eqnarray*}
Now putting $u=\sum e_i$ gives
$$\int_Mq\left(\sum e_i\right)^{2n-2}=\lambda_qQ\left(\sum e_i\right)^{n-1}=\lambda\left(\frac{b_2+2n-2}{2n-1}\right)b_2^{n-1}.$$
Similarly, $q^2$ remains of type $(4,4)$ under deformations of the complex structure, so there exists a constant $\lambda_{q^2}$ such that
$$\int_Mq^2u^{2n-4}=\lambda_{q^2}Q(u)^{n-2}$$
for all $u\in\H^2(M,\C)$. To find $\lambda_{q^2}$ we substitute $u=e_k$,
which gives
\begin{eqnarray*}
\lambda_{q^2} & = & \lambda_{q^2}Q(e_k)^{n-2} \\
 & = & \int_M\left(\sum e_i^4+\sum_{i\neq j}e_i^2e_j^2\right)e_k^{2n-4} \\
 & = & \int_M e_k^{2n}+\sum_{i\neq k}\int_Me_i^4e_k^{2n-4}+\sum_{i,j,k\mbox{ \footnotesize{distinct}}}\int_Me_i^2e_j^2e_k^{2n-4}+\sum_{i\neq k}\int_Me_i^2e_k^{2n-2}+\sum_{j\neq k}\int_Me_j^2e_k^{2n-2} \\
 & = & \lambda+(b_2-1)\frac{\lambda {n \choose 2}}{{2n \choose
 4}}+(b_2-1)(b_2-2)\frac{\lambda {n \choose 1,1,n-2}}{{2n \choose
 2,2,2n-4}}+2(b_2-1)\frac{\lambda {n \choose 1}}{{2n \choose
 2}} \\
 & = & \lambda\frac{(b_2+2n-2)(b_2+2n-4)}{(2n-1)(2n-3)}. \\
\end{eqnarray*}
Now putting $u=\sum e_i$ gives
$$\int_Mq^2\left(\sum e_i\right)^{2n-4}=\lambda_{q^2}Q\left(\sum e_i\right)^{n-2}=\lambda\frac{(b_2+2n-2)(b_2+2n-4)}{(2n-1)(2n-3)}b_2^{n-2}.$$
Substituting everything into the inequality of Lemma~\ref{inequality} and simplifying yields the inequality
$$(b_{\Theta^n}(M)+2b_{\Theta^{n-2}\Theta_2}(M))(b_2+2n-2)\geq b_{\Theta^n}(M)(b_2+2n-4),$$
which can be rearranged to give
$$b_{\Theta^n}(M)\geq -(b_2+2n-2)b_{\Theta^{n-2}\Theta_2}(M).$$
\end{proof}

Our main conjecture, Conjecture~\ref{main}, asserts that $b_{\Theta^{n-2}\Theta_2}(M)<0$, which would give the following.

\begin{corollary}
If $b_{\Theta^{n-2}\Theta_2}(M)<0$ then the second Betti number $b_2$ is bounded above.
\end{corollary}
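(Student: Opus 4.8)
The plan is to derive the corollary as a short formal consequence of Theorem~\ref{b2inequality}, using Jiang's bound to control the numerator and the integrality of the Chern numbers to control the denominator.

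First I would rearrange the inequality of Theorem~\ref{b2inequality}. Under the hypothesis $b_{\Theta^{n-2}\Theta_2}(M)<0$ the quantity $-b_{\Theta^{n-2}\Theta_2}(M)$ is strictly positive, so dividing
$$b_{\Theta^n}(M)\geq-(b_2+2n-2)b_{\Theta^{n-2}\Theta_2}(M)$$
through by it isolates the Betti number as
$$b_2\leq\frac{b_{\Theta^n}(M)}{-b_{\Theta^{n-2}\Theta_2}(M)}-(2n-2).$$
Everything then reduces to bounding the numerator from above and the denominator from below by constants that depend only on $n$.

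For the numerator I would cite the recent result of Jiang~\cite{jiang20}, which furnishes an upper bound $b_{\Theta^n}(M)\leq C_n$ with $C_n$ depending only on the dimension; equivalently, since $b_{\Theta^n}(M)=48^nn!\,\hat{A}^{1/2}[M]$, this is an upper bound on $\hat{A}^{1/2}[M]$. For the denominator I would invoke the theorem established earlier, that $b_{\Theta^{n-2}\Theta_2}(M)$ is a linear combination of Chern numbers computable in every dimension: this presents $b_{\Theta^{n-2}\Theta_2}(M)$ as a fixed rational combination of Chern numbers with a common denominator $D_n$ depending only on $n$. Because the Chern numbers are integers, $b_{\Theta^{n-2}\Theta_2}(M)\in\frac{1}{D_n}\mathbb{Z}$, so the assumption that it is negative forces $-b_{\Theta^{n-2}\Theta_2}(M)\geq 1/D_n$. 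Substituting gives the explicit uniform bound $b_2\leq D_nC_n-(2n-2)$.

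I expect the only genuinely subtle point to be this denominator estimate: the bare pointwise hypothesis $b_{\Theta^{n-2}\Theta_2}(M)<0$ does not by itself prevent the ratio from diverging, since a priori $-b_{\Theta^{n-2}\Theta_2}(M)$ could tend to zero across a sequence of manifolds while $b_{\Theta^n}(M)$ stays bounded. It is precisely the discreteness coming from integrality of the Chern numbers that upgrades negativity at each manifold into a positive lower bound valid across all manifolds of the given dimension, and hence into a uniform bound on $b_2$. The remainder of the argument is purely formal, the analytic substance having already been carried out in Theorem~\ref{b2inequality} and in Jiang's cited estimate.
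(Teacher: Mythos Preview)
Your proposal is correct and follows essentially the same argument as the paper: Jiang's result gives the explicit upper bound $b_{\Theta^n}(M)=48^nn!\,\hat{A}^{1/2}[M]<48^nn!$, and the discreteness of $b_{\Theta^{n-2}\Theta_2}(M)$ coming from its expression as a fixed rational combination of integral Chern numbers gives the lower bound $-b_{\Theta^{n-2}\Theta_2}(M)\geq 1/C_n$, exactly as you outline. Your identification of the integrality step as the one genuinely substantive point is also on the mark.
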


\begin{proof}
Because $b_{\Theta^{n-2}\Theta_2}(M)$ is a (rational) linear combination of Chern numbers, if it is negative then it must be $\leq -1/C_n$ where $C_n$ is a positive integer depending only on $n$. On the other hand, Jiang proved that $\hat{A}^{1/2}[M]$ is strictly less than $1$ (Corollary~5.5 of~\cite{jiang20}), and this implies that
$$b_{\Theta^n}(M)=48^nn!\hat{A}^{1/2}[M]<48^nn!.$$
Theorem~\ref{b2inequality} then gives
$$48^nn!>b_{\Theta^n}(M)\geq -(b_2+2n-2)b_{\Theta^{n-2}\Theta_2}(M)\geq\frac{b_2+2n-2}{C_n},$$
so $b_2$ is bounded above by $48^nn!C_n-2n+2$.
\end{proof}

\begin{example}
In dimension four we already have the sharp bound $b_2\leq 23$, due to Guan~\cite{guan01}. In dimension six we have
$$\hat{A}^{1/2}[M]=\frac{1}{967680}(31c_2^3-44c_2c_4+16c_6)[M],$$
so $\hat{A}^{1/2}[M]<1$ implies
$$b_{\Theta^3}(M)\leq 48^33!\frac{967679}{967680}=\frac{23224296}{35}.$$
In addition
$$b_{\Theta\Theta_2}(M)=-\frac{8}{35}(11c_2^3-26c_2c_4+12c_6)[M],$$
so $b_{\Theta\Theta_2}(M)<0$ would imply $b_{\Theta\Theta_2}(M)\leq -\frac{8}{35}$. Therefore
$$\frac{23224296}{35}\geq b_{\Theta^n}(M)\geq -(b_2+4)b_{\Theta^{n-2}\Theta_2}(M)\geq (b_2+4)\frac{8}{35},$$
which simplifies to give
$$2903033\geq b_2.$$
\end{example}

In fact, we can improve on this. The first step is the following surprising relation.

\begin{lemma}
\label{six}
Let $M$ be an irreducible hyperk{\"a}hler manifold of complex dimension six. Then
$$b_{\Theta^3}(M)+3b_{\Theta\Theta_2}(M)=2^{10}3^4\chi(\O_M)=2^{12}3^4.$$
\end{lemma}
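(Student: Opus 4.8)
The plan is to derive the relation directly from the known expressions for $b_{\Theta^3}(M)$ and $b_{\Theta\Theta_2}(M)$ as linear combinations of the dimension-six Chern numbers, together with the Hirzebruch--Riemann--Roch formula for $\chi(\mathcal{O}_M)$ in terms of those same Chern numbers. Recall from Section~4 that
$$b_{\Theta^3}(M)=\frac{24}{35}(31c_2^3-44c_2c_4+16c_6)[M],\qquad b_{\Theta\Theta_2}(M)=-\frac{8}{35}(11c_2^3-26c_2c_4+12c_6)[M].$$
First I would form the combination $b_{\Theta^3}(M)+3b_{\Theta\Theta_2}(M)$ symbolically. Pulling out the common factor $\frac{8}{35}$, the coefficient of $c_2^3[M]$ becomes $3\cdot 31-3\cdot 11=66$, the coefficient of $c_2c_4[M]$ becomes $-3\cdot 44+3\cdot 26=-54$, and the coefficient of $c_6[M]$ becomes $3\cdot 16-3\cdot 12=12$, so that
$$b_{\Theta^3}(M)+3b_{\Theta\Theta_2}(M)=\frac{8}{35}\bigl(66c_2^3-54c_2c_4+12c_6\bigr)[M]=\frac{24}{35}\bigl(22c_2^3-18c_2c_4+4c_6\bigr)[M].$$

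The second ingredient is the Riemann--Roch expression for $\chi(\mathcal{O}_M)$ in complex dimension six. This is the degree-$12$ part of $\mathrm{Td}(M)$ evaluated on $[M]$, and for an irreducible hyperk\"ahler sixfold only the Chern numbers $c_2^3$, $c_2c_4$, and $c_6$ can appear (the odd Chern classes vanish, and $c_1=0$ kills every term involving $c_1$). I would look up or recompute this Todd-genus expression; the assertion of the lemma amounts to the statement that $22c_2^3-18c_2c_4+4c_6$ is exactly $\tfrac{35}{24}\cdot 2^{10}3^4\,\chi(\mathcal{O}_M)$ as a combination of Chern numbers, i.e.\ that the two linear forms in $(c_2^3,c_2c_4,c_6)$ agree. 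Since $\chi(\mathcal{O}_M)=n+1=4$ for an irreducible hyperk\"ahler $2n$-fold with $n=3$, the target value $2^{10}3^4\chi(\mathcal{O}_M)=2^{12}3^4$ is forced, and it remains only to check the coefficient identity.

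The cleanest way to close the argument is to verify the coefficient identity on a single example where all the Chern numbers are independently known, and then invoke linearity. The Hilbert scheme $S^{[3]}$ and the generalized Kummer $K_3(A)$ supply two such examples: from the $\mathrm{OG6}$ computation in Section~5 we have the explicit values $c_2^3[M]=30720$, $c_2c_4[M]=7680$, $c_6[M]=1920$ for one hyperk\"ahler sixfold, and the table in Section~5 records the corresponding Rozansky--Witten invariants. Substituting into $\frac{24}{35}(22c_2^3-18c_2c_4+4c_6)[M]$ should return $2^{12}3^4=331776$ on the nose, matching $b_{\Theta^3}+3b_{\Theta\Theta_2}=442368+3(-36864)=331776$; doing this for $S^{[3]}$ as well pins down the linear form uniquely once we know the forms depend only on the three Chern numbers. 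I expect the main (and really only) obstacle to be bookkeeping: getting the Todd-genus/Riemann--Roch coefficients in dimension six correct, since an arithmetic slip there would break the identity. No conceptual difficulty arises, because the constancy of $\chi(\mathcal{O}_M)=4$ and the expressibility of both sides as the \emph{same} linear form in $(c_2^3,c_2c_4,c_6)$ are exactly what make the relation hold universally rather than case by case.
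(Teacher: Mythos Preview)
Your approach is exactly the paper's: write $b_{\Theta^3}+3b_{\Theta\Theta_2}$ as a linear combination of $(c_2^3,c_2c_4,c_6)$, recognize it as $2^{10}3^4$ times the Riemann--Roch expression for $\chi(\mathcal{O}_M)$, and then use $\chi(\mathcal{O}_M)=4$. However, there is an arithmetic slip in your first step: $3\cdot 31-3\cdot 11=60$, not $66$. Redoing the combination gives
\[
b_{\Theta^3}(M)+3b_{\Theta\Theta_2}(M)=\frac{8}{35}\bigl(60c_2^3-54c_2c_4+12c_6\bigr)[M]=\frac{48}{35}\bigl(10c_2^3-9c_2c_4+2c_6\bigr)[M],
\]
which is the expression the paper obtains. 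With your incorrect coefficient $22$ (rather than $20$) the numerical check on $\mathrm{OG6}$ would fail, so you would have caught the error there; but as written the displayed formula is wrong.

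On the verification step, your plan to ``check on a single example and invoke linearity'' is not sufficient: you are comparing two linear forms in the three variables $(c_2^3,c_2c_4,c_6)$, so agreement must be checked on three linearly independent Chern-number vectors, not one. Using $S^{[3]}$, $K_3(A)$, and $\mathrm{OG6}$ together would work provided their Chern-number triples are linearly independent (they are), but the cleaner and more robust route --- and the one the paper takes --- is simply to write down $\mathrm{td}_6$ with $c_1=c_3=c_5=0$ and compare coefficients directly; one finds $\mathrm{td}_6=\tfrac{1}{60480}(10c_2^3-9c_2c_4+2c_6)$, and $60480\cdot\tfrac{48}{35}=2^{10}3^4$.
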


\begin{proof}
Writing the left-hand side as a linear combination of Chern numbers, we find
$$b_{\Theta^3}(M)+3b_{\Theta\Theta_2}(M)=\frac{48}{35}(10c_2^3-9c_2c_4+2c_6)[M].$$
By the Riemann-Roch theorem, $\chi(\O_M)$ can also be written as a linear combination of Chern
numbers, and a calculation reveals agreement with the above expression up to the constant $2^{10}3^4$. Finally, one notes that $\chi(\O_M)=4$ for an irreducible hyperk{\"a}hler manifold of complex dimension six.
\end{proof}

Our Conjecture~\ref{main} immediately yields a lower bound for $b_{\Theta^3}(M)$.

\begin{corollary}
If $b_{\Theta\Theta_2}(M)<0$ then $b_{\Theta^3}(M)>2^{12}3^4$. Equivalently,
$$\hat{A}^{1/2}[M]=\frac{1}{48^33!}b_{\Theta^3}(M)>\frac{1}{2}.$$
\end{corollary}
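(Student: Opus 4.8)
The plan is to read the result off directly from Lemma~\ref{six}. That lemma supplies the exact relation $b_{\Theta^3}(M)+3b_{\Theta\Theta_2}(M)=2^{12}3^4$, valid for every irreducible hyperk{\"a}hler sixfold. I would simply rearrange it as $b_{\Theta^3}(M)=2^{12}3^4-3b_{\Theta\Theta_2}(M)$. The hypothesis $b_{\Theta\Theta_2}(M)<0$ forces $-3b_{\Theta\Theta_2}(M)>0$, so the right-hand side strictly exceeds $2^{12}3^4$, yielding the first inequality $b_{\Theta^3}(M)>2^{12}3^4$.

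For the equivalent formulation, I would invoke the theorem $b_{\Theta^n}(M)=48^nn!\,\hat{A}^{1/2}[M]$, which in dimension $2n=6$ reads $b_{\Theta^3}(M)=48^3\cdot 3!\cdot\hat{A}^{1/2}[M]$, so that $\hat{A}^{1/2}[M]=\frac{1}{48^33!}b_{\Theta^3}(M)$. The one verification needed is the numerical identity $2^{12}3^4=\frac{1}{2}\cdot 48^3\cdot 3!$; since $48^3\cdot 3!=663552$ and $2^{12}3^4=331776$, the bound $b_{\Theta^3}(M)>2^{12}3^4$ is exactly the statement $\hat{A}^{1/2}[M]>\frac{1}{2}$.

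There is essentially no obstacle here: all of the content is carried by Lemma~\ref{six}, and the corollary is a one-line rearrangement together with an elementary arithmetic check of the constant. The only point to be careful about is that Lemma~\ref{six} is an exact equality of Chern numbers rather than an inequality, so the strictness of the conclusion is inherited directly from the strictness of the hypothesis $b_{\Theta\Theta_2}(M)<0$.
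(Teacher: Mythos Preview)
Your argument is correct and matches the paper's approach exactly: the corollary is stated there as an immediate consequence of Lemma~\ref{six}, and your rearrangement together with the arithmetic check $48^3\cdot 3!=2^{13}3^4$ is precisely what is intended.
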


Now we can improve the upper bound on the second Betti number $b_2$.

\begin{thm}
If $b_{\Theta\Theta_2}(M)<0$ then
$$b_2\leq 1451519.$$
\end{thm}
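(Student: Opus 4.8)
The plan is to combine two earlier results that are both special to dimension six ($n=3$): the exact relation of Lemma~\ref{six}, which reads $b_{\Theta^3}(M)+3b_{\Theta\Theta_2}(M)=2^{12}3^4$, and the inequality of Theorem~\ref{b2inequality}, which in this dimension becomes $b_{\Theta^3}(M)\geq-(b_2+4)b_{\Theta\Theta_2}(M)$. The key idea is to use the equality of Lemma~\ref{six} to \emph{eliminate} $b_{\Theta^3}(M)$ altogether, rather than bounding it crudely from above by $48^33!$ as in the earlier example; this is precisely what sharpens the constant.

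First I would substitute $b_{\Theta^3}(M)=2^{12}3^4-3b_{\Theta\Theta_2}(M)$ into the inequality of Theorem~\ref{b2inequality}, which gives
$$2^{12}3^4-3b_{\Theta\Theta_2}(M)\geq-(b_2+4)b_{\Theta\Theta_2}(M).$$
Collecting the terms involving $b_{\Theta\Theta_2}(M)$ on one side yields
$$2^{12}3^4\geq-(b_2+1)b_{\Theta\Theta_2}(M).$$
Since $b_{\Theta\Theta_2}(M)<0$ by hypothesis, the quantity $-b_{\Theta\Theta_2}(M)$ is positive, so I can divide to obtain $b_2+1\leq 2^{12}3^4/(-b_{\Theta\Theta_2}(M))$.

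Next I would supply the integrality input that controls the final constant. From the dimension-six formula $b_{\Theta\Theta_2}(M)=-\frac{8}{35}(11c_2^3-26c_2c_4+12c_6)[M]$, the factor $(11c_2^3-26c_2c_4+12c_6)[M]$ is an integer, being a $\Z$-linear combination of Chern numbers. If $b_{\Theta\Theta_2}(M)<0$ this integer is strictly positive, hence at least $1$, so $b_{\Theta\Theta_2}(M)\leq-\frac{8}{35}$, i.e. $-b_{\Theta\Theta_2}(M)\geq\frac{8}{35}$. Feeding this into the bound gives
$$b_2+1\leq\frac{2^{12}3^4}{8/35}=2^{12}3^4\cdot\frac{35}{8}=1451520,$$
and therefore $b_2\leq1451519$.

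The computation itself is elementary; the substance lies in recognizing that the exact linear relation of Lemma~\ref{six} can stand in for the much weaker upper bound on $b_{\Theta^3}(M)$ used previously, which removes the slack responsible for the earlier constant $2903033$. The only step demanding real care is the integrality argument, since the estimate $-b_{\Theta\Theta_2}(M)\geq\frac{8}{35}$ is the single lever determining the answer: any tightening of the denominator here would feed directly through to a better bound on $b_2$.
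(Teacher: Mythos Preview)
Your proof is correct and uses the same two ingredients as the paper (Lemma~\ref{six} and Theorem~\ref{b2inequality}); the only difference is that you eliminate $b_{\Theta^3}(M)$ and invoke integrality of $b_{\Theta\Theta_2}(M)\in\frac{8}{35}\Z$, whereas the paper eliminates $b_{\Theta\Theta_2}(M)$, analyzes the monotone function $\frac{3b_{\Theta^3}(M)}{b_{\Theta^3}(M)-2^{12}3^4}$, and uses $b_{\Theta^3}(M)\in\frac{24}{35}\Z$ to get $b_{\Theta^3}(M)\geq 2^{12}3^4+\frac{24}{35}$. The two integrality inputs are equivalent via Lemma~\ref{six}, and your route is slightly more direct by avoiding the monotonicity step.
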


\begin{remark}
We expect that this bound must be extremely crude. There are three known examples of irreducible hyperk{\"a}hler manifolds in dimension six, and their second Betti numbers are $7$, $8$, and $23$. The results of Sawon~\cite{sawon15} and Kim and Laza~\cite{kl20} suggest that $23$ is most likely the largest possible value for $b_2$.
\end{remark}

\begin{proof}
Combining Theorem~\ref{b2inequality}, $b_{\Theta\Theta_2}(M)<0$, and Lemma~\ref{six} gives
$$b_2+4\leq\frac{b_{\Theta^3}(M)}{-b_{\Theta\Theta_2}(M)}=\frac{3b_{\Theta^3}(M)}{b_{\Theta^3}(M)-2^{12}3^4}.$$
The right-hand side is a decreasing function of $b_{\Theta^3}(M)$, so it is maximized when $b_{\Theta^3}(M)$ is smallest. Now $b_{\Theta^3}(M)>2^{12}3^4$ and
$$b_{\Theta^3}(M)=\frac{24}{35}(31c_2^3-44c_2c_4+16c_6)[M]\in\frac{24}{35}\Z,$$
therefore $b_{\Theta^3}(M)$ is at least $2^{12}3^4+\frac{24}{35}$, and substituting this values gives
$$b_2+4\leq 1451523.$$
\end{proof}

We also obtain an upper bound for $b_{\Theta^3}(M)$.

\begin{proposition}
If $b_{\Theta\Theta_2}(M)<0$ then $b_{\Theta^3}(M)\leq 2^{10}3^47$. Equivalently,
$$\hat{A}^{1/2}[M]=\frac{1}{48^33!}b_{\Theta^3}(M)\leq\frac{7}{8}.$$
\end{proposition}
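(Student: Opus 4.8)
The plan is to obtain the upper bound from the very same two ingredients already used to bound $b_2$, namely Lemma~\ref{six} and Theorem~\ref{b2inequality}, but now feeding in the universal \emph{lower} bound $b_2\geq 3$ in place of a bound on $b_{\Theta^3}(M)$. That $b_2\geq 3$ holds for every irreducible hyperk\"ahler manifold is standard: the Beauville--Bogomolov form on $\H^2(M,\mathbb{R})$ has signature $(3,b_2-3)$, the positive three-space being spanned by the K\"ahler class together with the real and imaginary parts of $\sigma$. The idea is to run the argument of the previous theorem ``in reverse'': instead of extremizing over $b_{\Theta^3}(M)$ to bound $b_2$, I extremize over $b_2$ to bound $b_{\Theta^3}(M)$.

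Concretely, I would first rewrite Lemma~\ref{six} as $b_{\Theta\Theta_2}(M)=\tfrac13\bigl(2^{12}3^4-b_{\Theta^3}(M)\bigr)$, so that the hypothesis $b_{\Theta\Theta_2}(M)<0$ is exactly the statement $b_{\Theta^3}(M)>2^{12}3^4$ of the earlier corollary. Substituting this expression into the dimension-six case ($n=3$) of Theorem~\ref{b2inequality} converts $b_{\Theta^3}(M)\geq-(b_2+4)\,b_{\Theta\Theta_2}(M)$ into
$$3\,b_{\Theta^3}(M)\geq (b_2+4)\bigl(b_{\Theta^3}(M)-2^{12}3^4\bigr).$$

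Now the factor $b_{\Theta^3}(M)-2^{12}3^4$ is strictly positive, so I may replace $b_2+4$ on the right-hand side by its minimum $7$ (using $b_2\geq 3$) without disturbing the inequality, which yields $3\,b_{\Theta^3}(M)\geq 7\bigl(b_{\Theta^3}(M)-2^{12}3^4\bigr)$. Rearranging gives $4\,b_{\Theta^3}(M)\leq 7\cdot 2^{12}3^4$, that is, $b_{\Theta^3}(M)\leq 2^{10}3^4\cdot 7$. Dividing by $48^33!=2^{13}3^4$ then gives $\hat{A}^{1/2}[M]\leq 7/8$, as claimed.

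The computation is entirely elementary, so I do not expect any genuine obstacle. The only real input is the lower bound $b_2\geq 3$, and the one point requiring care is the logical direction: it is precisely the positivity of $b_{\Theta^3}(M)-2^{12}3^4$ (guaranteed by the hypothesis) that licenses substituting the smallest admissible $b_2$ and so produces an \emph{upper} bound on $b_{\Theta^3}(M)$. As a consistency check I would note that equality forces $b_2=3$, a value realized by none of the known six-dimensional examples (whose second Betti numbers are $7$, $8$, and $23$), so the bound $7/8$ is presumably far from sharp.
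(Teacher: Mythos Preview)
Your proof is correct and follows essentially the same approach as the paper: combine Lemma~\ref{six} with Theorem~\ref{b2inequality}, use the hypothesis to ensure $b_{\Theta^3}(M)-2^{12}3^4>0$, then feed in the universal lower bound $b_2\geq 3$ and rearrange. The paper presents this slightly more tersely by reusing the inequality $b_2+4\leq\frac{3b_{\Theta^3}(M)}{b_{\Theta^3}(M)-2^{12}3^4}$ already derived in the preceding theorem, but the content is identical.
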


\begin{remark}
This would improve on Jiang's bound $\hat{A}^{1/2}[M]<1$ (though note that Jiang's bound holds in all dimensions, not just dimension six).
\end{remark}

\begin{proof}
The second Betti number $b_2$ must be at least three, and therefore the inequality at the start of the last proof gives
$$7\leq\frac{3b_{\Theta^3}(M)}{b_{\Theta^3}(M)-2^{12}3^4}.$$
Rearranging gives $b_{\Theta^3}(M)\leq 2^{10}3^47$.
\end{proof}

\begin{example}
For the Hilbert scheme $S^{[3]}$ of three points on a K3 surface we have $b_{\Theta^3}(M)=373248$ and $b_2=23$. This implies that we have equality in
$$b_2+4\leq\frac{3b_{\Theta^3}(M)}{b_{\Theta^3}(M)-2^{12}3^4}.$$
It follows that in the decomposition of the second Chern class $c_2=p+p^{\perp}$ in the proof of Lemma~\ref{inequality}, we must have $p^{\perp}=0$. In other words, $c_2=p$ lies in the part $\mathrm{Sym}^2\H^2(M,\Q)$ of $\H^4(M,\Q)$ which is generated by $\H^2(M,\Q)$.
\end{example}

\begin{example}
For the generalized Kummer variety $K_3(A)$ and O'Grady's example $\mathrm{OG6}$ we have $b_{\Theta^3}(M)=442368$ and $b_2=7$ and $8$, respectively. This implies that we have equality in
$$b_2+4\leq\frac{3b_{\Theta^3}(M)}{b_{\Theta^3}(M)-2^{12}3^4}$$
for $\mathrm{OG6}$, but strict inequality for $K_3(A)$. Thus for $\mathrm{OG6}$, $p^{\perp}=0$ and $c_2$ lies in $\mathrm{Sym}^2\H^2(M,\Q)$, but for $K_3(A)$ this is no longer true.
\end{example}

\section{Comparisons to other work}

Cao and Jiang proposed (Conjecture~3.6 in~\cite{cj20}), and Jiang later proved (Corollary~5.3 in~\cite{jiang20}), that Todd classes of hyperk{\"a}hler manifolds are quasi-effective, in the following sense.

\begin{thm}
Let $X$ be a hyperk{\"a}hler manifold of complex dimension $2n$ and $L$ a nef and big line bundle on $X$. Then $\int_M\mathrm{td}_{2n-2i}L^{2i}>0$ for all $0\leq i\leq n$.
\end{thm}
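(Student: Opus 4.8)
The plan is to turn each integral into a value of the Beauville--Bogomolov form and thereby reduce the statement to the positivity of finitely many universal constants. Since $X$ is hyperk\"ahler we have $c_1(X)=0$, so $\mathrm{td}(X)=\hat{A}(X)$ and each graded piece $\mathrm{td}_{2n-2i}(X)=\hat{A}_{2n-2i}(X)$ is a polynomial in the even Chern classes $c_2(X),c_4(X),\ldots$. As noted in the proof of Lemma~\ref{inequality}, these classes remain of type $(p,p)$ with respect to every complex structure in the twistor family, hence so does $\mathrm{td}_{2n-2i}(X)$. Corollary~23.17 of Huybrechts' notes in~\cite{ghj02} then provides a constant $\gamma_i$, depending only on the deformation type, with
$$\int_X\mathrm{td}_{2n-2i}(X)u^{2i}=\gamma_i\,q(u)^i\qquad\mbox{for all }u\in\H^2(X,\C).$$
As $L$ is nef and big its class lies in the positive cone, so $q(L)>0$ and the theorem is equivalent to $\gamma_i>0$ for $0\leq i\leq n$. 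The two extreme cases are immediate: for $i=0$, Hirzebruch--Riemann--Roch gives $\gamma_0=\int_X\mathrm{td}_{2n}(X)=\chi(\O_X)=n+1>0$, while for $i=n$ the constant $\gamma_n=\int_Xu^{2n}/q(u)^n$ is the Fujiki constant, which is positive since $\int_X\omega^{2n}>0$ for a K\"ahler class $\omega$.

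For the intermediate range I would extract the sign of $\gamma_i$ by specializing to $u=\sigma+\bar{\sigma}$. Expanding $(\sigma+\bar{\sigma})^{2i}$ and retaining only the $\sigma^i\bar{\sigma}^i$ term (the sole summand of the correct Hodge type) gives
$$\gamma_i\,q(\sigma+\bar{\sigma})^i={2i \choose i}\int_X\mathrm{td}_{2n-2i}(X)\sigma^i\bar{\sigma}^i,$$
and since $q(\sigma+\bar{\sigma})=2q(\sigma,\bar{\sigma})>0$ the sign of $\gamma_i$ equals that of the curvature integral $\int_X\hat{A}_{2n-2i}(X)\sigma^i\bar{\sigma}^i$. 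Writing $\hat{A}_{2n-2i}(X)$ as a polynomial in the $c_{2k}(X)$ and applying the Chern--Weil dictionary of Section~4 of~\cite{hs01} (equivalently Chapter~2 of~\cite{sawon00}), this integral becomes a fixed rational combination of Rozansky--Witten invariants. For $i=n-1$ one has $\mathrm{td}_2(X)=\frac{1}{12}c_2(X)$, so by Equation~8 of~\cite{hs01} the integral is a positive multiple of $\beta_{\Theta}>0$ and $\gamma_{n-1}>0$ unconditionally.

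The main obstacle is the rest of the intermediate range, where the combination involves invariants other than $\beta_{\Theta}$. Already for $i=n-2$ the class $\hat{A}_4(X)$ is a combination of $c_2^2$ and $c_4$, and the integral becomes a fixed combination of $b_{\Theta^n}(X)>0$ and $b_{\Theta^{n-2}\Theta_2}(X)$; pinning down its sign requires the sign of the latter, which is exactly the content of Conjecture~\ref{main}. In general the positivity of $\gamma_i$ is governed by the signs of the mixed invariants $b_{\Theta^{n-k}\Theta_k}(X)$: the cases in which the combination collapses to quantities already known to be positive (such as $\beta_{\Theta}$ and $b_{\Theta^n}$) can be settled outright, while the remaining cases would follow from Conjecture~\ref{main}. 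This is precisely the sense in which our conjecture yields this positivity in some cases, and it suggests that any unconditional argument must control these mixed Rozansky--Witten invariants by independent means.
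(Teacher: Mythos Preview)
Your proposal is not a proof of the theorem. You yourself concede in the final paragraph that the approach settles only $i\in\{0,n-1,n\}$ unconditionally and leaves the rest contingent on Conjecture~\ref{main}, which is open. The theorem is an unconditional result, so this is a genuine gap, not merely a different route.

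Moreover, your assertion that already for $i=n-2$ one ``requires the sign of $b_{\Theta^{n-2}\Theta_2}(X)$'' is mistaken. The paper explains how Cao and Jiang handled precisely this case without Conjecture~\ref{main}: they write
\[
\mathrm{td}_4=2(\mathrm{td}^{1/2})_4+(\mathrm{td}^{1/2})_2^2=\tfrac{1}{2880}(7c_2^2-4c_4)+\tfrac{1}{576}c_2^2
\]
and show that each summand integrates positively against $\sigma^{n-2}\bar\sigma^{n-2}$. The paper then notes that Conjecture~\ref{main} (equivalently $\int_M(2c_2^2-4c_4)\sigma^{n-2}\bar\sigma^{n-2}>0$) is \emph{strictly stronger} than this inequality, and only claims that the conjecture implies Jiang's positivity ``in some cases'', not all---so your blanket assertion that ``the remaining cases would follow from Conjecture~\ref{main}'' is also unsupported.

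The paper itself does not prove the full theorem; it quotes it as Corollary~5.3 of Jiang~\cite{jiang20}, whose argument does not pass through the signs of the $\beta_{\Theta_k}$. Your reduction to constants $\gamma_i$ via the generalized Fujiki relation is correct and matches how the paper reformulates the statement (as $\int_X\mathrm{td}_{2n-2i}\sigma^i\bar\sigma^i>0$), but an independent mechanism forcing $\gamma_i>0$ for the intermediate range is exactly what Jiang supplies and your proposal lacks.
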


This is equivalent to proving $\int_M\mathrm{td}_{2n-2i}\sigma^i\bar{\sigma}^i>0$, or in other words, proving that certain Rozansky-Witten invariants are positive. For example, consider the quasi-effectivity of $\mathrm{td}_4$, which is equivalent to
$$\int_M\mathrm{td}_4\sigma^{n-2}\bar{\sigma}^{n-2}=\int_M\frac{1}{720}(3c_2^2-c_4)\sigma^{n-2}\bar{\sigma}^{n-2}>0.$$
This case was already proved by Cao and Jiang, Theorem~3.2 in~\cite{cj20}, by writing
\begin{eqnarray*}
\mathrm{td}_4 & = & (\mathrm{td}^{1/2})_0(\mathrm{td}^{1/2})_4+(\mathrm{td}^{1/2})_2(\mathrm{td}^{1/2})_2+(\mathrm{td}^{1/2})_4(\mathrm{td}^{1/2})_0 \\
 & = & 2(\mathrm{td}^{1/2})_4+ (\mathrm{td}^{1/2})_2^2 \\
 & = & \frac{1}{2880}(7c_2^2-4c_4)+\frac{1}{576}c_2^2,
\end{eqnarray*}
and then showing that both
$$\int_M(7c_2^2-4c_4)\sigma^{n-2}\bar{\sigma}^{n-2}\qquad\qquad\mbox{and}\qquad\qquad\int_Mc_2^2\sigma^{n-2}\bar{\sigma}^{n-2}$$
are positive.

Now as already observed in the proof of Lemma~\ref{inequality}, using Chern-Weil theory (see Section~4 of Hitchin and Sawon~\cite{hs01} or Chapter~2 of Sawon~\cite{sawon00}) we can show that
$$\beta_{\Theta}^2+2\beta_{\Theta_2}=\frac{(8\pi^2)^2n(n-1)\int_M(s_2^2)\sigma^{n-2}\bar{\sigma}^{n-2}}{\int_M\sigma^n\bar{\sigma}^n}=\frac{(8\pi^2)^2n(n-1)\int_M4c_2^2\sigma^{n-2}\bar{\sigma}^{n-2}}{\int_M\sigma^n\bar{\sigma}^n}.$$
In the same way we find that
$$\frac{5}{2}\beta_{\Theta_2}=\frac{(8\pi^2)^2n(n-1)\int_M(-s_4)\sigma^{n-2}\bar{\sigma}^{n-2}}{\int_M\sigma^n\bar{\sigma}^n}=-\frac{(8\pi^2)^2n(n-1)\int_M(2c_2^2-4c_4)\sigma^{n-2}\bar{\sigma}^{n-2}}{\int_M\sigma^n\bar{\sigma}^n}.$$
Here $s_2=2!ch_2=-2c_2$ and $s_4=4!ch_4=2c_2^2-4c_4$. The fact that $\int_Mc_2^2\sigma^{n-2}\bar{\sigma}^{n-2}>0$ then implies $\beta_{\Theta}^2+2\beta_{\Theta_2}>0$, but this only gives a negative lower bound of $-\beta_{\Theta}^2/2$ for $\beta_{\Theta_2}$.

Finally, we observe that our Conjecture~\ref{main} that $\beta_{\Theta_2}<0$ is equivalent to
$$\int_M(2c_2^2-4c_4)\sigma^{n-2}\bar{\sigma}^{n-2}>0,$$
which is strictly stronger than the known result
$$\int_M(7c_2^2-4c_4)\sigma^{n-2}\bar{\sigma}^{n-2}>0$$
from above. Indeed the latter follows from the former by writing
$$\int_M(7c_2^2-4c_4)\sigma^{n-2}\bar{\sigma}^{n-2}=\int_M5c_2^2\sigma^{n-2}\bar{\sigma}^{n-2}+\int_M(2c_2^2-4c_4)\sigma^{n-2}\bar{\sigma}^{n-2}.$$






\newpage
\appendix
\section{Maple code}

\vspace*{-15mm}\hspace*{-5mm}
\includegraphics[scale=0.75,page=1]{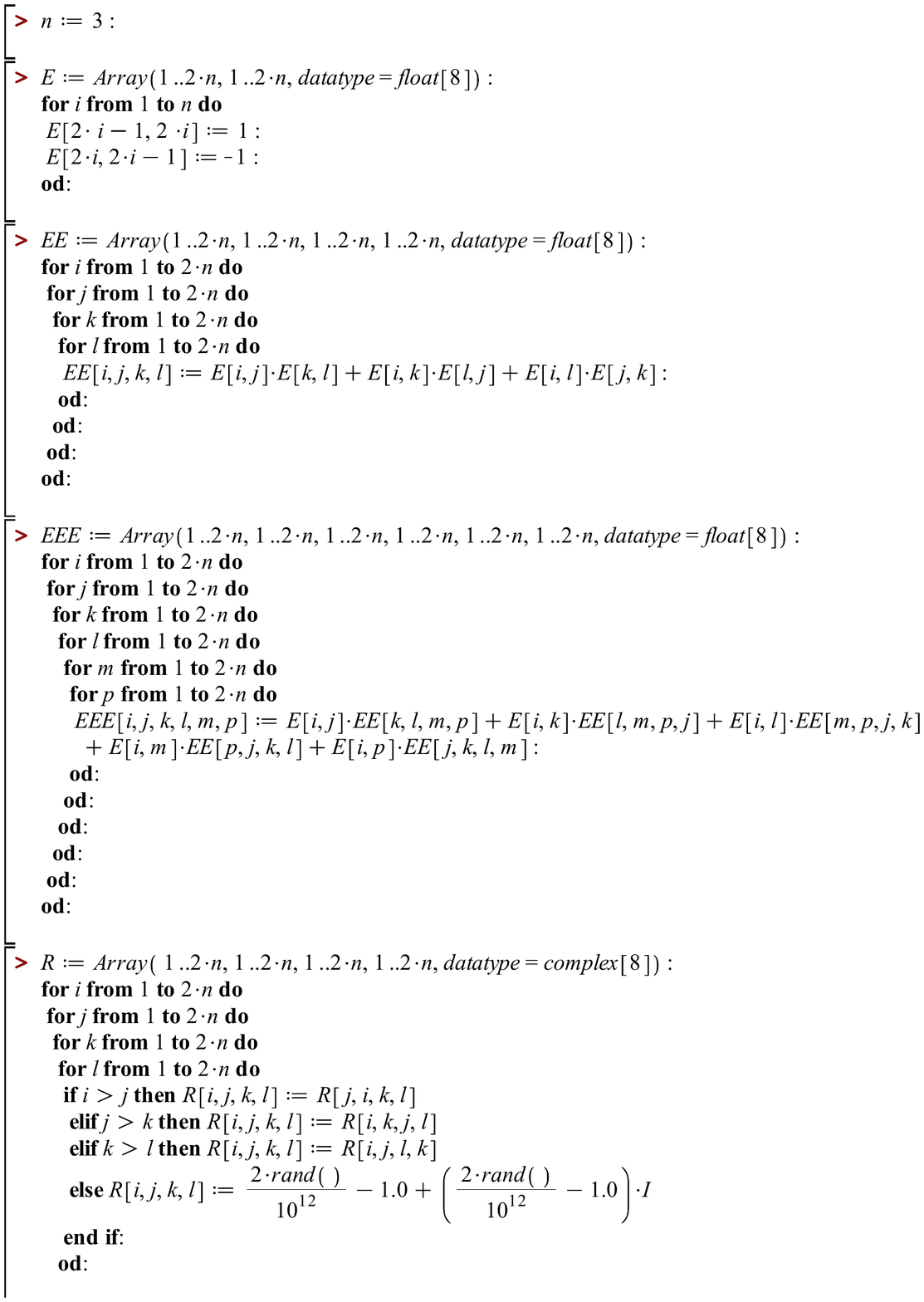}

\newpage
\vspace*{-15mm}\hspace*{-10mm}
\includegraphics[scale=0.80,page=2]{Betti_bound_codes3.pdf}

\newpage
\vspace*{-15mm}\hspace*{-10mm}
\includegraphics[scale=0.80,page=3]{Betti_bound_codes3.pdf}


\begin{flushleft}
Department of Mathematics\hfill sawon@email.unc.edu\\
University of North Carolina\hfill sawon.web.unc.edu\\
Chapel Hill NC 27599-3250\\
USA\\
\end{flushleft}

\end{document}